\newcommand{\f}{\ensuremath{\varphi}}
\newcommand{\p}{\ensuremath{\psi}}
\renewcommand{\a}{\ensuremath{\alpha}}
\renewcommand{\b}{\ensuremath{\beta}}
\newcommand{\si}{\sigma}
\newcommand{\De}{\Delta}
\newcommand{\Ga}{\Gamma}
\newcommand{\Si}{\Sigma}
\newcommand{\mdl}[1]{\models_{\lgc{#1}}}
\newcommand{\imp}{\Rightarrow}
\newcommand{\lang}{{\mathcal L}}
\newcommand{\alg}[1]{\mathbf{#1}}
\newcommand{\pos}[1]{\mathbf{#1}}
\newcommand{\A}{\alg{A}}
\newcommand{\B}{\alg{B}}
\newcommand{\E}{\alg{E}}
\newcommand{\cop}[1]{\mathbb{#1}}
\newcommand{\eq}{\approx}
\newcommand{\leqn}{\preccurlyeq}
\newcommand{\eleq}{\sqsubseteq}
\newcommand{\lgc}[1]{\mathrm{#1}}
\newcommand{\K}{{\ensuremath{\mathcal{K}}}}
\newcommand{\F}{{\ensuremath{\alg{F}}}}
\newcommand{\V}{{\ensuremath{\mathcal{V}}}}
\newcommand{\U}{{\ensuremath{\mathcal{U}}}}
\newcommand{\var}{{\ensuremath{\mathrm{Var}}}}
\newcommand{\FpV}{\Fp_{\V}}
\newcommand{\Fp}{\alg{Fp}}
\newcommand{\tp}{\mathrm{type}}
\newcommand{\cg}{\mathrm{Con}}
\newcommand{\ExUn}{\mathsf{E}}
\newcommand{\EUAlg}{\mathsf{C}}
\renewcommand{\U}[2]{\mathsf{U}_{#1}(#2)}
\newcommand{\FML}{\alg{Fm}_{\lang}(\omega)}
\newcommand{\FreeV}{\F_{\V}}
\newcommand{\Fml}{\alg{Fm}_{\lang}}
\newtheorem{theorem}{Theorem}
\newtheorem{lemma}[theorem]{Lemma}
\newtheorem{example}[theorem]{Example}
\newtheorem{corollary}[theorem]{Corollary}
\begin{document}

\title[Exact Unification]{Exact Unification}

\author[G. Metcalfe]{George Metcalfe}	
\address{Mathematical Institute, University of Bern,  Switzerland}	
\email{george.metcalfe@math.unibe.ch}  
\thanks{The first author acknowledges support from Swiss National Science Foundation grant 200021{\_}146748.}	
\author[L.M. Cabrer]{Leonardo M. Cabrer}	
\address{Department of Statistics, Computer Science and Applications, University of Florence, Italy}	
\thanks{The second author research was supported by a Marie Curie Intra European Fellowship within  the [European Community's] Seventh Framework Programme [FP7/2007-2013] under the Grant Agreement n. 326202.}	
\email{l.cabrer@disia.unifi.it}  

\keywords{Unification, Admissibility, Equational Class, Free Algebra}
\subjclass{F.4.1, I.2.3, I.1.2}

%%%%%%%%%%%%%%%%%%%%%%%%%%%%%%%%%%%%%%%%%%%%%%%%%%%%%%

\begin{abstract}
A new hierarchy of ``exact" unification types is introduced, motivated by the study of admissibility 
for equational classes and non-classical logics. 
In this setting, unifiers of identities in an equational class are preordered, not by instantiation, 
but rather by inclusion over the corresponding sets of unified identities. Minimal complete sets of 
unifiers under this new preordering always have a smaller or equal cardinality than those provided by 
the standard instantiation preordering, and in significant cases a dramatic reduction 
may be observed. In particular, the classes of distributive lattices, idempotent semigroups, and MV-algebras, 
which all have nullary unification type, have unitary or finitary exact type. 
These results are obtained via an algebraic interpretation of exact unification, inspired by Ghilardi's 
algebraic approach to equational unification. 
\end{abstract}

%%%%%%%%%%%%%%%%%%%%%%%%%%%%%%%%%%%%%%%%%%%%%%%%%%%%%%

\maketitle

%%%%%%%%%%%%%%%%%%%%%%%%%%%%%%%%%%%%%%%%%%%%%%%%%%%%%%%%%%%%%%%%%%%%
 
\section{Introduction}

It has long been recognized that the study of admissible rules is inextricably bound up with the study of equational 
unification (see, e.g.,~\cite{Ryb97,Ghi99,Ghi00}). Indeed, from an algebraic perspective, admissibility in an equational 
class (variety) of algebras may be viewed as a generalization of unifiability in that class, and 
conversely,  checking admissibility may be reduced to comparing certain sets of unifiers.
%Moreover, in certain cases, 
%checking admissibility reduces to finding suitable basis sets of unifiers and checking unifiability. 
%
%\leo{(I am not sure about this sentence. I think I understand your intention but I am not sure it is clear. What about: ``Conversely,  %checking admissibility reduces to compare sets of unifiers (see ??? below).'' What I intend to capture in thsi sentence is to say %that a rule is admissible if the set of unifiers of the premises is contained in the union of the sets of unifiers of each equation in the %conclusion. )}
%
This paper provide a new classification of equational unification problems 
that simplifies such reductions.\footnote{The reader is referred to~\cite{BS81} and~\cite{ML71} for basic notions 
of universal algebra and category theory, respectively.} 

Let us fix an equational class of algebras $\V$ for a language $\lang$ 
and denote by $\Fml(X)$, the {\em formula algebra} 
(absolutely free algebra or term algebra) of $\lang$ over a set of variables $X \subseteq \omega$. 
 A substitution (homomorphism) $\si \colon \Fml(X) \to\FML$ 
 is called a {\em $\V$-unifier (over $X$)} of  a set of $\lang$-identities $\Si$ with variables in $X$ if  
\[
\V \models \si(\f) \eq \si(\p) \quad \mbox{for all $\f \eq \p$ in $\Si$.}
\] 
A clause $\Si \imp\De$  (an ordered pair of finite sets of $\lang$-identities $\Si,\De$) 
 is  {\em $\V$-admissible} if for each substitution $\si \colon \Fml(X) \to\FML$ 
 where the variables in $\Si \cup \De$ are contained in $X$,
 \[
 \mbox{$\si$ is a $\V$-unifier of $\Si$} 
 \quad \Rightarrow \quad
  \mbox{$\si$ is a $\V$-unifier of some member of $\De$.} 
 \]
 In particular, $\Si$ is $\V$-unifiable if and only if $\Si \imp \emptyset$ is not $\V$-admissible.

Now suppose that the unification type of $\V$ is at most finitary, meaning that every $\V$-unifier 
of a set of $\lang$-identities $\Si$ over the variables in $\Si$  is a substitution instance of one 
of a finite set $S$ of $\lang$-unifiers of $\Si$. Then a clause $\Si \imp \De$ is $\V$-admissible if each member  
of $S$ is an $\lang$-unifier of a member of $\De$. If there is an algorithm for determining the finite basis set $S$ 
for $\Si$ and the equational theory of $\V$ is decidable, then checking $\V$-admissibility is also decidable. 
This observation, together with the pioneering work of Ghilardi on equational unification for classes of Heyting 
and modal algebras~\cite{Ghi99,Ghi00}, has led to a wealth of decidability, complexity, and axiomatization results for 
admissibility in these classes and corresponding modal and intermediate 
logics~\cite{Iem01,Iem05,Jer05,CM10,BR11a,BR11b,OR13,Jer13b}.

The success of this approach to admissibility appears to rely on considering varieties with at most finitary unification type. 
This is not a necessary condition, however,  as illustrated by the case of MV-algebras, the algebraic semantics of \L ukasiewicz 
infinite-valued logic. Decidability, complexity, and axiomatization results for admissibility in this class have been established by 
Je{\v r}{\'a}bek~\cite{Jer09a,Jer09b,Jer13b} 
via a similar reduction of finite sets of identities to finite approximating sets of identities.  
On the other hand, it has been shown by Marra and Spada~\cite{MS13} that 
the class of MV-algebras has nullary unification type. This means in particular that there are finite sets of identities 
for which no finite basis of unifiers exists. Further examples of this discrepancy may be found in~\cite{CM13}, including 
 the very simple example of the class of distributive lattices where admissibility and validity of clauses coincide 
but unification is nullary.

As mentioned above, it is possible to check the $\V$-admissibility of a clause $\Si \imp \De$ by checking 
that every $\V$-unifier of $\Si$ in a certain ``basis set'' $\V$-unifies $\De$. Such a basis set $S$ typically has 
the property that every other $\V$-unifier of $\Si$ is a substitution instance of a member of $S$.
The starting point for this paper is the observation that a weaker condition on $S$ suffices, 
leading potentially to smaller sets of $\V$-unifiers. What is really required for checking admissibility is the property that 
each $\V$-unifier of $\Si$ is also a $\V$-unifier of all identities $\V$-unified by some particular member of $S$. 
Then  $\Si \imp \De$ is $\V$-admissible if each member of $S$ is a $\V$-unifier of a member of $\De$. 
This leads to a new ordering of $\V$-unifiers and hierarchy of exact (unification) types. 

We obtain also a Ghilardi-style algebraic characterization of exact unification, where the role of formulas is taken 
by the finitely presented algebras of the equational class. In Ghilardi's approach, a unifier is a homomorphism 
from a finitely presented algebra into a projective algebra of the class, and unifiers are preordered by composition of  
homomorphisms. Here, coexact unifiers are defined as homomorphisms from a finitely presented algebra onto 
an exact algebra (an algebra that embeds into the free algebra of the class on countably infinitely many generators) 
and the preordering remains the same. This contrasts with the syntactic approach to exact unification where the 
unifiers remain unchanged but a new preorder is introduced. Nevertheless, the syntactic and algebraic exact unification types 
coincide as in the standard approach.

Although certain equational classes have the same exact type as unification type (in particular, any equational class of 
unitary type), crucially we obtain examples where the exact type is smaller.  
In particular, distributive lattices have unitary exact type, while idempotent semigroups, various classes of pseudo-complemented 
distributive lattices, and MV-algebras have finitary exact type. 
We also provide an example (due to R.~Willard) of an equational class of infinitary unification type but finitary exact type.

We proceed as follows. In Section~\ref{s:equational_unification_and_admissibility}, we recall standard notions of equational 
unification and admissible rules, and Ghilardi's algebraic account of unification types. In Section~\ref{s:exact}, we introduce 
the new notion of exact unifier and exact unification types, providing an algebraic interpretation and applications in 
Section~\ref{s:algebraic}. Several cases studies are considered in Section~\ref{s:case_studies} 
and some ideas for further research are presented in Section~\ref{s:concluding_remarks}.

%%%%%%%%%%%%%%%%%%%%%%%%%%%%%%%%%%%%%%%%%%%%%%%%%%%%%%%%%

\section{Equational Unification and Admissibility}\label{s:equational_unification_and_admissibility}

In this section, we describe briefly some key ideas from the theory of 
equational unification (referring to~\cite{BS01} for further details) 
and their relevance to the study of admissible rules. We recall, in particular,  
the unification type of a finite set of identities in an equational class and 
the algebraic interpretation of unification types provided by Ghilardi in~\cite{Ghi97}. 
These ideas, and also developments in subsequent sections, are most elegantly 
presented in the  general setting of preordered sets.

Let $\pos{P} = \langle P, \le \rangle$ be a preordered set (i.e., $\le$ is a reflexive and transitive binary relation on $P$). 
A {\it complete} set for $\pos{P}$ is a subset $M \subseteq P$ such that for every $x \in P$, there exists $y \in M$ satisfying 
$x \le y$. A complete set $M$ for $\pos{P}$ is called a {\it $\mu$-set} for $\pos{P}$ if  $x \not \le y$ and $y \not \le x$ 
for all distinct $x,y \in M$. It is easily seen that if $\pos{P}$ has a $\mu$-set, then every $\mu$-set of $\pos{P}$ has the 
same cardinality. Hence $\pos{P}$ may be said to be {\em nullary} if it  has no $\mu$-sets ($\tp(\pos{P}) = 0$), {\em infinitary} if it has 
a $\mu$-set of infinite cardinality ($\tp(\pos{P}) = \infty$), {\em finitary} if it has a finite $\mu$-set of cardinality greater than $1$ 
($\tp(\pos{P}) = \omega$), and {\em unitary} if it has a $\mu$-set of cardinality 1 ($\tp(\pos{P}) = 1$). These types are ordered as 
follows: $1<\omega<\infty<0$. 

The following useful lemma demonstrates that the type of a preordered set may be viewed as a 
categorical invariant.

\begin{lemma}\label{Lemma:EquivPreorder}
Suppose that two preordered sets $\langle P, \le \rangle$ and  $\langle Q, \le \rangle$ 
are equivalent: i.e., there exists a map $e \colon P \to Q$ such that\vspace{.05in}
\begin{enumerate}
\item[(1)]	for each $q \in Q$, there is a $p \in P$ such that $e(p) \le q$ and $q \le e(p)$\vspace{.05in}
\item[(2)]	for each $p_1,p_2 \in P$, $p_1 \le p_2$ iff $e(p_1) \le e(p_2)$.
\end{enumerate}
Then $\langle P, \le \rangle$ and  $\langle Q, \le \rangle$ have the same type.
\end{lemma}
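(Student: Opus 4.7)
The plan is to show that $\mu$-sets can be transported back and forth across the equivalence $e$ while preserving cardinality, which immediately yields that the existence of a $\mu$-set of any given cardinality is the same in both preordered sets, and hence the types coincide.

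First I would establish that $M \subseteq P$ is complete if and only if $e(M) \subseteq Q$ is complete. For the forward direction, given $q \in Q$, use (1) to find $p \in P$ with $e(p) \leq q$ and $q \leq e(p)$, then find $m \in M$ with $p \leq m$ and invoke (2) to get $q \leq e(p) \leq e(m)$. For the converse, given $p \in P$, apply completeness of $e(M)$ to $e(p)$ to obtain $m \in M$ with $e(p) \leq e(m)$, then use (2) to conclude $p \leq m$.

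Next I would observe that condition (2) forces $e$ to be injective on any $\mu$-set: if $m_1, m_2 \in M$ satisfy $e(m_1) = e(m_2)$, then $e(m_1) \leq e(m_2)$ and $e(m_2) \leq e(m_1)$ give $m_1 \leq m_2$ and $m_2 \leq m_1$ by (2), forcing $m_1 = m_2$ by the $\mu$-set condition. Combining with the previous step, if $M$ is a $\mu$-set in $P$ then $e(M)$ is complete, has cardinality $|M|$, and inherits the incomparability property directly from (2): any $e(m_1) \leq e(m_2)$ with $e(m_1) \neq e(m_2)$ would lift to $m_1 \leq m_2$ with $m_1 \neq m_2$, contradicting the $\mu$-set property of $M$. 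Hence $e(M)$ is a $\mu$-set of the same cardinality.

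For the reverse direction, given a $\mu$-set $N \subseteq Q$, use (1) to pick, for each $n \in N$, some $p_n \in P$ with $e(p_n) \leq n$ and $n \leq e(p_n)$, and set $M = \{p_n : n \in N\}$. The indexing is injective because $p_{n_1} = p_{n_2}$ forces $n_1$ and $n_2$ to be equivalent in $Q$ via $e(p_{n_1}) = e(p_{n_2})$, hence equal in the $\mu$-set $N$. Completeness and the incomparability property of $M$ follow as in the previous step, applying (2) to translate the preorder on $M$ to the preorder on the corresponding $n_i$'s. Thus $M$ is a $\mu$-set in $P$ with $|M| = |N|$, and the classification into unitary, finitary, infinitary, or nullary is preserved. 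The main subtlety is keeping the bookkeeping clean, since $\leq$ is only a preorder rather than a partial order — but the $\mu$-set condition collapses preorder-equivalent elements to identical ones, so the injectivity arguments go through without needing any quotient construction.
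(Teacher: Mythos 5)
Your proof is correct; the paper actually states this lemma without proof, and your argument---transporting complete sets and $\mu$-sets across $e$ in both directions while using condition (2) and the $\mu$-set incomparability requirement to get the cardinality-preserving injectivity---is exactly the routine verification the authors leave implicit. The closing observation that the $\mu$-set condition rules out distinct preorder-equivalent elements, so no quotient is needed, correctly identifies the only subtle point.
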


We turn our attention now to the syntactic account of equational unification. 
Let us fix $\lang$ to be an algebraic language and $\V$ an equational class of $\lang$-algebras 
(equivalently, a variety: a class of $\lang$-algebras closed under taking products, subalgebras, and homormophic 
images).\footnote{The results of this paper 
are also valid for quasi-equational classes and, more generally, 
for prevarieties (classes of algebras closed under products, 
subalgebras and isomorphic images). However, as all of our examples and the vast majority of cases considered in 
the literature are equational classes, we restrict our account to this slightly simpler setting.} 
Let $X \subseteq \omega$ be a set of variables,
 and  consider substitutions 
$\si_i \colon \Fml(X) \to\FML$ for $i = 1,2$. We say that $\si_1$ is {\em more general} than $\si_2$  (written $\si_2 \leqn \si_1$) if there exists a substitution $\si' \colon \FML\to \FML$ such that $\si' \circ \si_1 = \si_2$. 

Let $\Si$ be a finite set of $\lang$-identities, denoting the variables occurring in $\Si$ by $\var(\Si)$. Then 
$\U{\V}{\Si}$ is defined as the set of $\V$-unifiers of $\Si$ over $\var(\Si)$ preordered by $\leqn$. 
For $\U{\V}{\Si} \not = \emptyset$,  the {\em $\V$-unification type} of $\Si$ is defined as  $\tp(\U{\V}{\Si})$.  
The {\em unification type of $\V$} is the maximal type of a $\V$-unifiable finite set $\Si$ of $\lang$-identities.

\begin{example}\label{e:equnif}
Equational unification has been studied for a wide range of equational classes. 
In the most general setting of {\em syntactic unification} where $\V$ is the class of all $\lang$-algebras, 
every syntactically unifiable finite set $\Si$ of $\lang$-identities has  a most general unifier; 
that is, syntactic unification is unitary (see, e.g.,~\cite{BS01}). The class of {\em Boolean algebras} 
is also unitary~\cite{BS87}: if $\{\f \eq \top\}$ is unifiable (equivalent to the satisfiability of $\f$), 
then it has a most general unifier defined by $\si(x) = \lnot \f \lor x$ 
for each $x \in \var(\f)$. The class of {\em Heyting algebras} is not unitary; for example, 
$\{x \lor y \eq \top\}$ has a $\mu$-set of unifiers $\{\si_1,\si_2\}$ where $\si_1(x) = \top$, $\si_1(y) = y$, 
$\si_2(x) = x$, $\si_2(y) = \top$. It is, however, finitary~\cite{Ghi99}. More problematically, 
the class of {\em semigroups} is infinitary~\cite{Plo72}: e.g., $\{x \cdot y \eq y \cdot x\}$ has  a $\mu$-set 
$\{\si_{m,n} \mid \gcd(m,n)= 1\}$ where $\si_{m,n}(x) = z^m$ and $\si_{m,n}(y) = z^n$. 
Many familiar classes of algebras are nullary; in particular, 
the class of {\em distributive lattices} has nullary unification type (see~\cite{Ghi99}); 
e.g., $\{x \land y \eq z \lor w\}$ has no $\mu$-set. Other nullary classes of algebras 
include {\em idempotent semigroups (bands)}~\cite{Ba86}, {\em pseudo-completemented 
distributive lattices}~\cite{Ghi97}, and {\em MV-algebras}~\cite{MS13}.
\end{example}

We now recall Ghilardi's algebraic account of equational unification~\cite{Ghi97}. 
Let $\FreeV(X)$ denote the free $\lang$-algebra of $\V$ over a set of 
variables $X$ and let $h_\V \colon \Fml(X) \to \FreeV(X)$ be the canonical homomorphism (that is, the unique homomorphism that acts as the identity on the elements of $X$). 
Given a finite set of $\lang$-identities $\Si$ and a finite set $X \supseteq \var(\Si)$, we denote by $\FpV(\Si,X)$ 
the algebra in $\V$ finitely presented by $\Si$ and $X$: that is, the quotient algebra $\FreeV(X) / \Theta_\Si$ where 
$\Theta_\Si$ is the congruence on $\FreeV(X)$ generated by the set $\{(h_\V(\f),h_\V(\p)) \mid \f \eq \p \in \Si\}$. We also let
$\mathsf{FP}(\V)$ denote the class of finitely presented algebras of $\V$.

Given $\A \in \mathsf{FP}(\V)$, a homomorphism $u \colon \A \to \B$ is called a {\em unifier} for $\A$ if 
 $\B \in \mathsf{FP}(\V)$ is {\em projective} in $\V$: that is, there exist homomorphisms $\iota \colon \B \to \FreeV(\omega)$ and 
 $\rho \colon \FreeV(\omega) \to \B$ such that $\rho \circ \iota$ is the identity map on $B$. Let $u_i \colon \A \to \B_i$ for $i=1,2$ be unifiers 
for $\A$. Then $u_1$ is {\em more general than} $u_2$, written $u_2 \le u_1$, if there exists a homomorphism 
$f \colon \B_1 \to \B_2$ such that $f \circ u_1 = u_2$. 

Let $\U{\V}{\A}$ be the set of unifiers of $\A \in \mathsf{FP}(\V)$ preordered by $\le$.  
For $\U{\V}{\A}\neq\emptyset$, the {\em unification type of $\A$ in $\V$} is 
defined as $\tp(\U{\V}{\A})$ and the {\em algebraic unification type} of $\V$ is the maximal type of 
$\A$ in $\mathsf{FP}(\V)$ such that $\U{\V}{\A}\neq\emptyset$. 

\begin{theorem}[Ghilardi~\cite{Ghi97}]
Let $\Si$ be a $\V$-unifiable finite set of identities and let  $\A$ denote the finitely presented algebra $\FpV(\Si,\var(\Si))$. Then
\[
\tp\bigl(\U{\V}{\Si}\bigr) = \tp\bigl(\U{\V}{\A}\bigr).
\]
Hence the algebraic unification type of $\V$ coincides with the unification type of $\V$.
\end{theorem}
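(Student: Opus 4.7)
The plan is to apply Lemma~\ref{Lemma:EquivPreorder} to the preordered sets $\U{\V}{\Si}$ and $\U{\V}{\A}$, where $\A = \FpV(\Si,\var(\Si))$, by exhibiting an equivalence map $e\colon \U{\V}{\Si} \to \U{\V}{\A}$. Given a syntactic $\V$-unifier $\si\colon\Fml(\var(\Si))\to\FML$, let $Y\subseteq\omega$ be a finite set containing the variables appearing in $\si(x)$ for each $x\in\var(\Si)$. Then $\FreeV(Y)$ is a finitely presented projective algebra of $\V$ (it is a retract of $\FreeV(\omega)$ via the inclusion of generators and any homomorphism $\FreeV(\omega)\to\FreeV(Y)$ extending the identity on $Y$). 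Since $\si$ $\V$-unifies $\Si$, the substitution induces a well-defined homomorphism $e(\si)\colon\A\to\FreeV(Y)$ sending $[\f]_\A\mapsto h_\V(\si(\f))$; this serves as our algebraic unifier.

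To verify condition~(2), suppose $\si_2\leqn\si_1$ via $\si'$, so that $\si'\circ\si_1=\si_2$. Choosing codomains $Y_1,Y_2$ as above, $\si'$ induces a homomorphism $f\colon\FreeV(Y_1)\to\FreeV(Y_2)$ with $f\circ e(\si_1)=e(\si_2)$, giving $e(\si_2)\le e(\si_1)$. Conversely, if $e(\si_2)\le e(\si_1)$ witnessed by $f\colon\FreeV(Y_1)\to\FreeV(Y_2)$, then freeness of $\FreeV(Y_1)$ allows $f$ to be lifted to a substitution $\si'$ on formulas inducing $f$ on the quotient, and the equality $f\circ e(\si_1)=e(\si_2)$ at the algebraic level translates back, via the canonical map $h_\V$ and the fact that $\si_2$ and $\si'\circ\si_1$ induce the same homomorphism into $\FreeV(\omega)$, to $\si_2\leqn\si_1$.

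For condition~(1), given any algebraic unifier $u\colon\A\to\B$ with $\B$ finitely presented projective, use the witnesses $\iota\colon\B\to\FreeV(\omega)$ and $\rho\colon\FreeV(\omega)\to\B$ with $\rho\circ\iota=\mathrm{id}_\B$. Compose $\iota\circ u$ with the canonical surjection $\FreeV(\var(\Si))\to\A$ to obtain a homomorphism $\FreeV(\var(\Si))\to\FreeV(\omega)$, and lift this through $h_\V$ to a substitution $\si\colon\Fml(\var(\Si))\to\FML$. Because this lift factors through $\A$, the substitution $\si$ is a $\V$-unifier of $\Si$. Taking $Y$ finite with $\iota(u(\A))\subseteq\FreeV(Y)\subseteq\FreeV(\omega)$, the retraction $\rho$ restricts to a homomorphism $\FreeV(Y)\to\B$ realizing $u\le e(\si)$, while $\iota$ furnishes (after possibly enlarging $Y$) a homomorphism witnessing $e(\si)\le u$.

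The main obstacle I anticipate is keeping the bookkeeping between the formula algebras, free algebras, and finitely presented quotients straight — in particular, showing that the codomain choice of $\FreeV(Y)$ is harmless (any two finite $Y$ large enough yield equivalent unifiers in the $\le$ preorder) and that projectivity of $\B$ is exactly what is needed to realize an algebraic unifier syntactically. Once the commuting diagrams involving $h_\V$, the quotient $\FreeV(\var(\Si))\to\A$, and the retraction $\rho\circ\iota=\mathrm{id}_\B$ are set up, both conditions of Lemma~\ref{Lemma:EquivPreorder} follow routinely, and the theorem — including the equality of the global types — is immediate.
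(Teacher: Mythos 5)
Your proof is correct and is essentially the argument the paper relies on: the paper states this theorem without proof (citing Ghilardi), but your construction of $\si\mapsto e(\si)$ together with the appeal to Lemma~\ref{Lemma:EquivPreorder} mirrors exactly the strategy of the paper's own proof of the exact-unification analogue (Theorem~\ref{Theo:EqualTypes}), including the bookkeeping about choosing a large enough finite codomain $\FreeV(Y)$ and using projectivity via $\rho\circ\iota=\mathrm{id}$. The one point to watch is that the backward half of your condition~(2) recovers $\si'\circ\si_1$ and $\si_2$ only up to equality modulo $\V$, so you must read $\leqn$ as instantiation modulo the equational theory (the standard convention in equational unification) rather than literal syntactic equality of substitutions; with that reading the argument goes through.
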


Let us see now how these ideas relate to the notion of admissibility defined in the introduction. 
Recall that the {\em kernel} of a homomorphism $h \colon \A \to \B$  is defined as 
\[
\ker(h) = \{(a,b) \in A^2 \mid h(a)=h(b)\}.
\]
In what follows, we will freely identify $\lang$-identities with pairs of $\lang$-formulas. We will also say that a 
$\lang$-clause $\Si \imp \De$ is {\em valid} in a class of $\lang$-algebras $\K$, written $\K \models \Si \imp \De$, 
if the universal sentence $(\forall \bar{x})(\bigwedge \Si \imp \bigvee \De)$ is valid in each algebra in $\K$.

\begin{lemma}\label{Lemma:Crucial}
  Let $\Si\cup\De$ be a finite set of $\lang$-identities. Then the following  are equivalent:
\begin{itemize}
\item[(i)]	$\Si\imp \De$ is admissible in $\V$.
\item[(ii)]	$\FreeV(\omega)\models \Si\imp \De$. 
\item[(iii)] 	For each $\si\colon \FML\to \FML$ such that $\Si\subseteq \ker(h_{\V}\circ\si)$,\[\De\cap\ker(h_{\V}\circ\si)\neq\emptyset.\]
\end{itemize}
If in particular $\De=\{ \f\eq\p\}$, then (i)-(iii) above are also equivalent to
\begin{itemize}
\item[(iv)] $(\f,\p)\in\bigcap \{\ker(h_{\V}\circ \si)\mid \si\colon \FML\to \FML\mbox{ and }\Si\subseteq \ker(h_{\V}\circ\si)\}$.
\end{itemize}
\end{lemma}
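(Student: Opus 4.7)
My plan is to prove (i) $\Leftrightarrow$ (iii) by unfolding the definition of a $\V$-unifier, (ii) $\Leftrightarrow$ (iii) via the universal property of $\FreeV(\omega)$, and (iii) $\Leftrightarrow$ (iv) directly from the shape of $\De$. The central bookkeeping identity, which I would establish at the outset, is that for every $\lang$-identity $\f \eq \p$ and every substitution $\si\colon\FML\to\FML$,
\[
\V \models \si(\f) \eq \si(\p) \ \iff\ h_{\V}(\si(\f)) = h_{\V}(\si(\p)) \ \iff\ (\f,\p)\in\ker(h_{\V}\circ\si).
\]
Hence \textquotedblleft$\si$ is a $\V$-unifier of $\Si$\textquotedblright\ is just $\Si\subseteq\ker(h_{\V}\circ\si)$, and \textquotedblleft$\si$ is a $\V$-unifier of some member of $\De$\textquotedblright\ is $\De\cap\ker(h_{\V}\circ\si)\neq\emptyset$. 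Since restricting or extending substitutions along the inclusion $\var(\Si\cup\De)\subseteq\omega$ does not affect the values of $\si$ on the finitely many variables in $\Si\cup\De$, the admissibility condition of the introduction is equivalent to the same condition formulated over all $\si\colon\FML\to\FML$, which is exactly (iii).

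For (ii) $\Leftrightarrow$ (iii), I would identify homomorphisms $v\colon\FML\to\FreeV(\omega)$ with compositions $h_{\V}\circ\si$ for substitutions $\si\colon\FML\to\FML$. Any such composition is a homomorphism $\FML\to\FreeV(\omega)$; conversely, given $v$, I would pick for each variable $x$ some $\si(x)\in\FML$ with $h_{\V}(\si(x))=v(x)$, using the surjectivity of $h_{\V}$, and extend freely over $\FML$, obtaining $h_{\V}\circ\si=v$ on variables and hence on all of $\FML$. The clause $\Si\imp\De$ is valid in $\FreeV(\omega)$ iff for every such $v$, if $v$ equates both sides of every identity in $\Si$ then it equates both sides of some identity in $\De$; translating through the bookkeeping identity above yields (iii).

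For the singleton case $\De=\{\f\eq\p\}$, the condition $\De\cap\ker(h_{\V}\circ\si)\neq\emptyset$ is the same as $(\f,\p)\in\ker(h_{\V}\circ\si)$, so (iii) says that $(\f,\p)\in\ker(h_{\V}\circ\si)$ for every $\si$ with $\Si\subseteq\ker(h_{\V}\circ\si)$, which is verbatim (iv). I expect no significant obstacle: the only subtle move is the lifting of homomorphisms $\FML\to\FreeV(\omega)$ to substitutions in the (ii) $\Leftrightarrow$ (iii) step, a standard consequence of the absolute freeness of $\FML$ together with the surjectivity of $h_{\V}$, and this is the classical source of the identification of admissibility in $\V$ with validity in $\FreeV(\omega)$.
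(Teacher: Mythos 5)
Your proposal is correct and follows essentially the same route as the paper: the kernel characterization $\V\models\si(\f)\eq\si(\p)\iff(\f,\p)\in\ker(h_{\V}\circ\si)$ and the lifting of homomorphisms $\FML\to\FreeV(\omega)$ to substitutions via the surjectivity of $h_{\V}$ are exactly the two ingredients of the paper's argument, which merely arranges the implications as a cycle (i)$\Rightarrow$(ii)$\Rightarrow$(iii)$\Rightarrow$(i) rather than using (iii) as a hub. The singleton case is handled identically.
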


\begin{proof}
(i)$\Rightarrow$(ii) 
Suppose that $\Si \imp \De$ is admissible in $\V$ and let $g \colon \FML \rightarrow \FreeV(\omega)$ be a 
homomorphism such that $\Si \subseteq \ker g$. Let $\si$ be a map 
sending each variable $x$ to a member of the equivalence class $g(x)$. By the universal mapping property for 
$\FML$, this extends to a homomorphism $\si \colon \FML \rightarrow \FML$. 
But $h_\V (\si(x)) = g(x)$ for each variable $x$, so $h_\V \circ \si = g$. 
Hence, for each  $\f' \eq \p' \in \Si$, also $h_\V(\si(\f')) =  h_\V(\si(\p'))$, i.e.,  $\V \models\si(\f') \eq \si(\p')$. 
Therefore, $\si$ is a unifier of $\Si$ and, by assumption, $\V \models \si(\f) \eq \si(\p)$ for some $\f\eq \p\in\De$. 
It follows that  
$g(\f) = h_\V(\si(\f)) = h_\V(\si(\p)) = g(\p)$ as required.

(ii)$\Rightarrow$(iii)
 Let $\si\colon \FML\rightarrow\FML$ be such that $\Si\subseteq \ker(h_{\V}\circ\si)$, that is, $\V \models \si(\Si)$. Therefore $\FreeV(\omega)\models \si(\Si)$.
 By assumption, there exists and equation $\f\eq\p\in\De$ such that  $\FreeV(\omega)\models \si(\f)\eq\si(\p)$, that is, $\V \models \si(\f)\eq\si(\p)$. Hence, $(\f,\p)\in\ker(h_{\V}\circ\si')\cap \De$.

(iii)$\Rightarrow$(i)  
Let $\si\colon \FML\rightarrow \FML$ be such that $\V \models\si(\Si)$, 
that is, $\Si\subseteq \ker(h_{\V}\circ\si)$. 
By hypothesis
there exists $\f\eq\p\in\De\cap\ker(h_{\V}\circ\si)$.
Then $h_{\V}(\si(\f))=h_{\V}(\si(\p))$, i.e., $\V \models\si(\f)\eq\si(\p)$. We obtained that $\Si\imp \De$ is admissible in $\V$.

If $\De=\{\f\eq\p\}$, (iii) is equivalent to (iv).
\end{proof}

Suppose now that $\V$ is any equational class of $\lang$-algebras and that $S$ is a $\mu$-set 
for the $\leqn$-preordered set of $\V$-unifiers of a finite set of $\lang$-identities $\Ga$. Then clearly:
\[
\Ga \imp \De \mbox{ is $\V$-admissible}  \quad \Leftrightarrow \quad 
\mbox{each $\si \in S$ is a $\V$-unifier of some $(\f \eq \p) \in \De$.}
\]
Note in particular that if $\V$ is unitary or finitary and there exists an algorithm for finding $\mu$-sets, 
then checking admissibility in $\V$ is decidable whenever the equational theory of $\V$ is decidable. 
There are, however, many well-known equational classes having infinitary or nullary unification type, for which 
such a method is unavailable. The starting point for the new approach described below is the observation 
that the above equivalence can hold even when $S$ is not a $\mu$-set for the $\leqn$-preordered set of $\V$-unifiers. 
More precisely, it is enough that each $\si \in \U{\V}{\Ga}$\ \,$\V$-unifies all identities $\V$-unified by some particular 
member of $S$.

%%%%%%%%%%%%%%%%%%%%%%%%%%%%%%%%%%%%%%%%%%%%%%%%%%%

\section{Exact Unifiers}\label{s:exact}

We begin by defining a new preorder on substitutions relative to a fixed equational class of $\lang$-algebras $\V$. 
Let  $X \subseteq \omega$ be a set of variables and let $\si_i \colon \Fml(X)\to\FML$ 
be substitutions for $i=1,2$.  We write $\si_2\eleq_{\V} \si_1$ if all identities $\V$-unified by $\si_1$ are $\V$-unified by $\si_2$. 
 More precisely:
\[
\si_2\eleq_{\V} \si_1 \quad \Leftrightarrow \quad  \ker(h_{\V}\circ\si_1)\subseteq\ker(h_{\V}\circ\si_2).
\]
Clearly, $\eleq$ is a preorder on substitutions of the form $\si \colon \Fml(X) \to\FML$. Moreover:

\begin{lemma}\label{l:easier}
Given $X \subseteq \omega$ and substitutions $\si_i \colon \Fml(X)\to\FML$  for $i=1,2$:
\[
\si_2\leqn\si_1 \quad \Rightarrow \quad \si_2\eleq_{\V} \si_1.
\]
\end{lemma}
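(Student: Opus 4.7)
The plan is to unfold the two definitions and observe that the implication is essentially a bookkeeping exercise. From the hypothesis $\si_2 \leqn \si_1$ I extract a witnessing substitution $\si'\colon \FML \to \FML$ with $\si'\circ\si_1 = \si_2$, so that composing on the left with $h_\V$ gives $h_\V \circ \si_2 = h_\V \circ \si' \circ \si_1$. The goal then becomes to show $\ker(h_\V \circ \si_1) \subseteq \ker(h_\V \circ \si' \circ \si_1)$, and since composition of maps automatically enlarges the kernel of the first factor, it suffices to know that the composition $h_\V \circ \si'$ factors through $h_\V$.

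The key auxiliary fact I would isolate is that for any substitution $\si'\colon \FML \to \FML$, one has $\ker(h_\V) \subseteq \ker(h_\V \circ \si')$. Granted this, given $(\f,\p) \in \ker(h_\V \circ \si_1)$, the pair $(\si_1(\f), \si_1(\p))$ lies in $\ker(h_\V)$; applying the fact to $\si'$ and this pair then gives $h_\V(\si'(\si_1(\f))) = h_\V(\si'(\si_1(\p)))$, i.e.\ $(\f,\p) \in \ker(h_\V \circ \si_2)$, as required.

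To justify the auxiliary fact I would use the universal mapping property of $\FreeV(\omega)$: the assignment $x \mapsto h_\V(\si'(x))$ on variables extends uniquely to a homomorphism $\bar v \colon \FreeV(\omega) \to \FreeV(\omega)$, and $\bar v \circ h_\V$ and $h_\V \circ \si'$ are two homomorphisms $\FML \to \FreeV(\omega)$ agreeing on every variable, hence equal. Therefore if $h_\V(\alpha) = h_\V(\beta)$, applying $\bar v$ yields $h_\V(\si'(\alpha)) = h_\V(\si'(\beta))$, which is the desired kernel inclusion.

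There is no real obstacle here; the only thing to be careful about is distinguishing the two universal properties in play (that of $\FML$, which is used implicitly in defining $\si'$ itself, and that of $\FreeV(\omega)$, which delivers the factoring homomorphism $\bar v$). Once the factorisation $h_\V \circ \si' = \bar v \circ h_\V$ is in hand, the implication is immediate.
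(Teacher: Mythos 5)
Your proof is correct and follows essentially the same route as the paper's: take the witnessing substitution $\si'$ with $\si'\circ\si_1=\si_2$ and observe that applying $\si'$ preserves membership in $\ker(h_{\V})$, which yields the kernel inclusion. The only difference is that you explicitly justify the step $\ker(h_{\V})\subseteq\ker(h_{\V}\circ\si')$ via the universal mapping property of $\FreeV(\omega)$, a point the paper's proof asserts without comment (it is the standard substitution-invariance of equational validity).
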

\begin{proof}
Suppose that $\si_2\leqn\si_1$. Then there exists a substitution  $\si' \colon \FML\to\FML$ such that $\si' \circ \si_1 = \si_2$. 
But then if $h_\V \circ \si_1(\f) = h_\V \circ \si_1(\p)$, also $h_\V \circ \si' \circ \si_1(\f) = h_\V \circ \si' \circ \si_1(\p)$ 
%\leo{(Do you think that we need to justify this?)}. GEORGE: It is ok for now!
That is, $h_\V \circ \si_2(\f) = h_\V  \circ \si_2(\p)$. 
\end{proof}

Given  a  finite set $\Si$ of $\lang$-identities and  $X\supseteq \var(\Si)$, $\ExUn_{\V}(\Si,X)$ is defined as 
the set of $\V$-unifiers of $\Si$ over $X$ preordered by $\eleq_{\V}$. 
For $X=\var(\Si)$, we simply write $\ExUn_{\V}(\Si)$ instead of $\ExUn_{\V}(\Si,X)$. 
Let us also define for $Y \subseteq X$ and a substitution $\si\colon\Fml(Y)\to \FML$, 
the unique extension $\si_X \colon \Fml(X) \to \FML$ of $\si$ as
\[
\si_X(x) =
\begin{cases}
\si(x)&\mbox{if }x\in Y;\\
x&\mbox{otherwise}.
\end{cases}
\]

\begin{lemma}\label{Lem:Independence}
Let $\Si$ be a finite set of identities and $X\supseteq\var(\Si)$. 
Then
\[
	\tp(\ExUn_{\V}(\Si,X)) = \tp(\ExUn_{\V}(\Si))
\]
\end{lemma}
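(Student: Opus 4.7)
My plan is to establish the equality of types by directly exhibiting a bijection between the $\mu$-sets of the two preordered sets, using the natural extension map $e \colon \ExUn_\V(\Si) \to \ExUn_\V(\Si, X)$ defined by $e(\sigma) = \sigma_X$ and its left inverse $r(\tau) = \tau|_{\Fml(\var(\Si))}$. Write $\bar{x} = \var(\Si)$ and $\bar{y} = X \setminus \bar{x}$ throughout.

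First, I establish cofinality of $e$: for each $\tau \in \ExUn_\V(\Si, X)$, $\tau \eleq_\V e(r(\tau))$. This follows by a one-line substitution: given $(\f, \p) \in \ker(h_\V \circ e(r(\tau)))$, i.e., $\V \models \f(r(\tau)(\bar{x}), \bar{y}) \eq \p(r(\tau)(\bar{x}), \bar{y})$, setting $\bar{y} := \tau(\bar{y})$ in this universally valid identity and using $r(\tau)(\bar{x}) = \tau(\bar{x})$ yields $\V \models \tau(\f) \eq \tau(\p)$, so $(\f, \p) \in \ker(h_\V \circ \tau)$.

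Second, I show $e$ is an order embedding: $\sigma_1 \eleq_\V \sigma_2$ if and only if $e(\sigma_1) \eleq_\V e(\sigma_2)$. The easier direction follows from $\ker(h_\V \circ \sigma_i) = \ker(h_\V \circ e(\sigma_i)) \cap \Fml(\bar{x})^2$. The other direction---which is the main obstacle---lifts a kernel inclusion on $\Fml(\bar{x})^2$ to one on $\Fml(X)^2$. I would handle it as follows: first rename the codomain variables of $\sigma_1, \sigma_2$ so that $\sigma_i(\bar{x})$ avoids $\bar{y}$ (this preserves $\eleq_\V$-equivalence classes), then use the coproduct decomposition $\FreeV(X) \cong \FreeV(\bar{x}) \amalg_\V \FreeV(\bar{y})$: the surjection $\phi \colon \langle \sigma_2(\bar{x})\rangle \twoheadrightarrow \langle \sigma_1(\bar{x})\rangle$ witnessing the kernel inclusion on $\Fml(\bar{x})^2$ extends via the coproduct universal property to $\phi \amalg \mathrm{id}_{\FreeV(\bar{y})}$, which witnesses the lifted inclusion.

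Finally, I combine these ingredients via a direct $\mu$-set argument rather than appealing to Lemma~\ref{Lemma:EquivPreorder}, since $e$ need not be essentially surjective in the sense required there (e.g., a $\tau$ collapsing $\bar{y}$ onto $\sigma(\bar{x})$ can strictly dominate any $e(\sigma)$). Given a $\mu$-set $M$ of $\ExUn_\V(\Si)$, the image $e(M)$ is an antichain by the order-embedding and is complete by cofinality combined with completeness of $M$, so it is a $\mu$-set of $\ExUn_\V(\Si, X)$ of the same cardinality. Conversely, any $\mu$-set element $\tau$ of $\ExUn_\V(\Si, X)$ must satisfy $\tau \equiv_\V e(r(\tau))$ by cofinality plus maximality, so $r$ sends $\mu$-sets bijectively back. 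The nullary case is obtained by contrapositive: if one preorder fails to admit a $\mu$-set, the correspondence would manufacture one in the other.
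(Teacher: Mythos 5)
Your proposal follows the same skeleton as the paper's proof: an extension map $e=(\_)_X$ and restriction $r$, the cofinality relation $\tau\leqn e(r(\tau))$ (which, note, already needs the codomain variables of $\tau$ renamed away from $X$ -- you invoke this renaming only in your second step, but your ``one-line substitution'' in the first step silently needs it too), and a transfer of complete sets and $\mu$-sets. You have also correctly isolated the crux, namely that $e$ should carry $\eleq_{\V}$-comparable pairs to $\eleq_{\V}$-comparable pairs; the paper disposes of this with ``it is clear that $(\_)_X$ preserves $\eleq_{\V}$''. Unfortunately your coproduct argument for this step does not work, and in fact the step itself is false as stated.

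The flaw in the coproduct argument is that the algebra relevant to $\ker(h_{\V}\circ(\si_i)_X)$ is the subalgebra $\langle h_{\V}\si_i(\bar x)\cup\bar y\rangle$ of $\FreeV(\omega)$, and this is only a homomorphic image -- in general a \emph{proper} one -- of the coproduct $\langle h_{\V}\si_i(\bar x)\rangle\amalg_{\V}\FreeV(\bar y)$. The map $\phi\amalg\mathrm{id}$ lives on the coproducts and need not descend to these quotients; equivalently, it only shows that the kernels of the canonical maps $\Fml(X)\to\langle h_{\V}\si_i(\bar x)\rangle\amalg_{\V}\FreeV(\bar y)$ are nested, and these are in general strictly smaller than the kernels you need to compare. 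A concrete counterexample: let $\V$ be the variety of nilpotent groups of class at most $2$, $\var(\Si)=\{x\}$, $X=\{x,y\}$, $\si_2(x)=[z_1,z_2]$ and $\si_1(x)=v$. Since free algebras in $\V$ are torsion-free and the commutator $[z_1,z_2]$ has infinite order, $\ker(h_{\V}\circ\si_1)=\ker(h_{\V}\circ\si_2)=\{(w_1,w_2)\mid\V\models w_1\eq w_2\}$, so $\si_1$ and $\si_2$ are $\eleq_{\V}$-equivalent (and the codomain variables avoid $X$). Yet $(xy,yx)\in\ker(h_{\V}\circ(\si_2)_X)$ because commutators are central in $\V$, while $(xy,yx)\notin\ker(h_{\V}\circ(\si_1)_X)$ because $\V$ is not abelian. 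Hence $(\si_1)_X\not\eleq_{\V}(\si_2)_X$: your order-embedding fails, and with it the claim that $e$ maps every $\mu$-set to a complete set (take the $\mu$-set $\{\si_2\}$ of $\ExUn_{\V}(\Si)$; its image $\{(\si_2)_X\}$ is not complete, since the identity substitution on $X$ is not below $(\si_2)_X$). Here the coproduct $\mathbb{Z}\amalg_{\V}\FreeV(y)$ is the Heisenberg group while $\langle[z_1,z_2],y\rangle\cong\mathbb{Z}^2$ is a proper quotient of it, which is exactly where $\phi\amalg\mathrm{id}$ fails to descend. A correct proof cannot transfer an arbitrary $\mu$-set pointwise; it must either choose the representatives more carefully or pass to the algebraic side (congruences with exact quotient). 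You have put your finger on the genuine weak point of the argument -- the paper's own assertion breaks on the same example -- but your repair does not close it.
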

\begin{proof}
Let $Y=\var(\Si)$ and 
$(\_){\upharpoonright}_Y\colon\ExUn_{\V}(\Si,X)\to\ExUn_{\V}(\Si,Y)$ 
be the map that assigns each unifier of $\Si$ on $X$ to its restriction 
to the variables in $Y$.
It is easy to see that $(\_){\upharpoonright}_Y$ preserves 
$\eleq_{\V}$.
Let $(\_)_X\colon\ExUn_{\V}(\Si,Y)\to\ExUn_{\V}(\Si,X)$  be the map
defind by $\si\to\si_X$. 
It is clear that $(\_)_X$ preserves $\eleq_{\V}$ and that ${\si_X}{\upharpoonright}_Y=\si$ for each $\si\in \ExUn_{\V}(\Si,Y)$. This proves that  $\tp(\ExUn_{\V}(\Si,Y)) = \tp(\ExUn_{\V}(\Si))\leq \tp(\ExUn_{\V}(\Si,X))$. 

To see that  $ \tp(\ExUn_{\V}(\Si))\geq \tp(\ExUn_{\V}(\Si,X))$, let $\si\in \ExUn_{\V}(\Si,X)$. Assume without loss of generality that 
$\var(\si(x))\cap X=\emptyset$ for each $x\in X$. Define $\lambda\colon\Fml(X)\to \FML$ by 
\[
\lambda(x)=\begin{cases}
x&\mbox{if }x\in Y;\\
\si(x)&\mbox{otherwise}.
\end{cases}
\]
Then $\si=\lambda\circ (\si{\upharpoonright}_Y)_X$, i.e., $\si\leqn (\si{\upharpoonright}_Y)_X$. Hence, if $S\subseteq\ExUn_{\V}(\Si,Y)$ is a complete set, $\{\gamma_X\mid \gamma\in S\}$ is a complete set for $\ExUn_{\V}(\Si,X)$. Thus $ \tp(\ExUn_{\V}(\Si))\geq \tp(\ExUn_{\V}(\Si,X))$.
\end{proof}

Suppose that $\Si$ is a finite set of $\lang$-identities and $\ExUn_{\V}(\Si) \neq \emptyset$. Then 
 the {\it exact type of $\Si$ in $\V$} is defined as $\tp(\ExUn_{\V}(\Si))$. 
We also define the {\em exact unification type of $\V$} to be
 the maximal exact type of a $\V$-unifiable finite set $\Si$ of $\lang$-identities.
 
 Note that, because $\si_2\leqn\si_1$ 
implies $\si_2\eleq_{\V} \si_1$ (Lemma~\ref{l:easier}), every complete set for $\U{\V}{\Si}$ 
is also a complete set for  $\ExUn_{\V}(\Si)$.  Hence, for 
 $\tp(\U{\V}{\Si}) \in \{1,\omega\}$:
\[
\tp(\ExUn_{\V}(\Si)) \le \tp(\U{\V}{\Si}).
\] 
Moreover, using Lemmas~\ref{Lemma:Crucial} and~\ref{Lem:Independence} we obtain:

\begin{corollary}\label{Cor:AdmissibleExact}
Let $\Si\cup\De$ be a finite set of $\lang$-identities and $S$ a complete set 
for $\ExUn_{\V}(\Si)$.  Then the following statements are equivalent:
\begin{itemize}
\item[(i)]	$\Si\imp \De$ is admissible in $\V$.
\item[(ii)] 	For each $\si \in S$, the unifier $\si_X\colon\Fml(\var(\Si\cup\De))\to \FML$ is a $\V$-unifier of some $\f \eq \p \in \De$. 
\item[(iii)] 	For each $\si\in S$, $\De\cap\ker(h_{\V}\circ\si_X)\neq\emptyset$.
\end{itemize} 
\end{corollary}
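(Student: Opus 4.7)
The plan is to reduce the corollary to Lemma~\ref{Lemma:Crucial}, using the completeness of $S$ under $\eleq_{\V}$ to replace the universal quantification over \emph{all} $\V$-unifiers of $\Si$ appearing there by a quantification over the members of $S$. The equivalence of (ii) and (iii) is a direct unfolding of definitions: $\sigma_X$ is a $\V$-unifier of $\f\eq\p$ exactly when $h_{\V}(\sigma_X(\f))=h_{\V}(\sigma_X(\p))$, i.e., $(\f,\p)\in\ker(h_{\V}\circ\sigma_X)$. So the substance lies in the equivalence (i)$\Leftrightarrow$(iii).

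For (i)$\Rightarrow$(iii), fix $\sigma\in S$ and set $X=\var(\Si\cup\De)$, $Y=\var(\Si)$. Extend $\sigma_X\colon\Fml(X)\to\FML$ by the identity on $\omega\setminus X$ to obtain a substitution $\hat\sigma\colon\FML\to\FML$. Since $\sigma$ is a $\V$-unifier of $\Si$ and $\var(\Si)\subseteq X$, we have $\Si\subseteq\ker(h_{\V}\circ\hat\sigma)$, so Lemma~\ref{Lemma:Crucial}(i)$\Rightarrow$(iii) yields $\De\cap\ker(h_{\V}\circ\hat\sigma)\neq\emptyset$. Every identity in $\De$ has variables in $X$, where $\hat\sigma$ and $\sigma_X$ agree, so this gives $\De\cap\ker(h_{\V}\circ\sigma_X)\neq\emptyset$.

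For (iii)$\Rightarrow$(i), suppose $\tau\colon\FML\to\FML$ with $\Si\subseteq\ker(h_{\V}\circ\tau)$; by Lemma~\ref{Lemma:Crucial}(iii)$\Rightarrow$(i) it suffices to produce an element of $\De\cap\ker(h_{\V}\circ\tau)$. Restricting $\tau$ to $\Fml(X)$ gives $\tau'\in\ExUn_{\V}(\Si,X)$. The proof of Lemma~\ref{Lem:Independence} records that $\{\gamma_X\mid\gamma\in S\}$ is complete for $\ExUn_{\V}(\Si,X)$, so some $\sigma\in S$ satisfies $\tau'\eleq_{\V}\sigma_X$, i.e., $\ker(h_{\V}\circ\sigma_X)\subseteq\ker(h_{\V}\circ\tau')$. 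Hypothesis (iii) now supplies $(\f,\p)\in\De\cap\ker(h_{\V}\circ\sigma_X)\subseteq\De\cap\ker(h_{\V}\circ\tau')=\De\cap\ker(h_{\V}\circ\tau)$.

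The only delicate point is the shuttling between the variable sets $Y\subseteq X\subseteq\omega$: each $\sigma\in S$ must be lifted from $Y$ to $X$ via $\sigma_X$ in one direction, and an arbitrary $\tau$ defined on $\FML$ must be restricted to $X$ in the other. Lemma~\ref{Lem:Independence} does the hard work of ensuring that this lifting/restriction respects completeness under $\eleq_{\V}$, after which both implications become essentially formal.
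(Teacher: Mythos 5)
Your proof is correct and takes exactly the route the paper intends: the corollary is stated as an immediate consequence of Lemmas~\ref{Lemma:Crucial} and~\ref{Lem:Independence}, and your argument is precisely that combination, using the kernel characterization of admissibility together with the fact (recorded in the proof of Lemma~\ref{Lem:Independence}) that lifting a complete set from $\var(\Si)$ to $\var(\Si\cup\De)$ preserves completeness under $\eleq_{\V}$.
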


The close connection between exact types and admissible rules is also witnessed by the following result.

\begin{theorem}\label{Theo:TypeAndSizeAdm}
If an $\lang$-clause $\Si\imp\De$ is $\V$-admissible and 
 $\ExUn_{\V}(\Si)$ has a finite $\mu$-set $S$, then there exists 
$\De'\subseteq \De$ such that 
$|\De'|\leq |S|$ and $\Si\imp\De'$ 
is $\V$-admissible.
\end{theorem}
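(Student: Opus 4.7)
My plan is to deduce the theorem directly from Corollary~\ref{Cor:AdmissibleExact}, essentially by reading off, for each member of the finite $\mu$-set $S$, a single identity in $\De$ that it witnesses, and then collecting these into $\De'$.

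More concretely, let $X = \var(\Si \cup \De)$ and suppose $\Si \imp \De$ is $\V$-admissible. Since $S$ is in particular a complete set for $\ExUn_{\V}(\Si)$, the implication (i)$\Rightarrow$(ii) of Corollary~\ref{Cor:AdmissibleExact} tells us that for each $\si \in S$ there exists an identity $e_\si \in \De$ such that $\si_X$ is a $\V$-unifier of $e_\si$. Choose one such $e_\si$ for each $\si \in S$ (using choice is harmless here since $S$ is finite) and set
\[
\De' = \{e_\si \mid \si \in S\} \subseteq \De.
\]
Then $|\De'| \le |S|$ by construction.

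It remains to verify that $\Si \imp \De'$ is $\V$-admissible. Set $X' = \var(\Si \cup \De') \subseteq X$. For each $\si \in S$ we have $\var(e_\si) \subseteq X'$, and the two extensions $\si_X$ and $\si_{X'}$ of $\si$ agree on $X'$ (both act as the identity on variables outside $\var(\Si)$), so in particular they agree on $\var(e_\si)$. Hence $\si_{X'}$ is still a $\V$-unifier of $e_\si \in \De'$. Applying the implication (ii)$\Rightarrow$(i) of Corollary~\ref{Cor:AdmissibleExact} with $\De$ replaced by $\De'$ and the same complete set $S$ for $\ExUn_{\V}(\Si)$ yields that $\Si \imp \De'$ is $\V$-admissible.

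The argument is almost entirely bookkeeping once the corollary is in hand, so there is no serious obstacle; the one point that deserves a sentence of care is that the ambient variable set shrinks when passing from $\De$ to $\De'$, so one must check that the relevant extensions $\si_X$ and $\si_{X'}$ behave compatibly on the variables of each $e_\si$. This is immediate from the definition of $\si_X$, since both extensions leave every variable outside $\var(\Si)$ fixed.
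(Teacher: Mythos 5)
Your proof is correct and takes essentially the same route as the paper: both select, for each member of the finite complete set, one identity of $\De$ that it $\V$-unifies and collect these into $\De'$, then use completeness to conclude admissibility of $\Si\imp\De'$. The only cosmetic difference is that you invoke Corollary~\ref{Cor:AdmissibleExact} as a black box (carefully handling the shrinking variable set $X'$), whereas the paper unfolds the same argument via Lemmas~\ref{Lemma:Crucial} and~\ref{Lem:Independence} directly.
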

\begin{proof}
Let $X=\var(\Si\cup\De)$ and let $\{\si_1,\ldots,\si_n\}$ be a $\mu$-set for $\ExUn_{\V}(\Si,X)$. 
By Lemma~\ref{Lem:Independence}, $\tp(\ExUn_{\V}(\Si))=\tp(\ExUn_{\V}(\Si,X))$.
By Lemma~\ref{Lemma:Crucial}, for each $i\in\{1,\ldots,n\}$, there exists an identity 
$\f_i\eq\p_i \in \De$ such that $(\f_i,\p_i)\in\ker(h_{\V}\circ \si_i)$. Let $\De'=\{\f_1\eq\p_1, \ldots, \f_n \eq \p_n\}$. 
We claim that $\Si\imp\De'$ is admissible in $\V$. 
Suppose that $\si\colon \Fml(X)\to \FML$ satisfies $\Si\subseteq\ker(h_{\V}\circ\si)$. 
Since $\si\in\ExUn_{\V}(\Si,X)$ and $\{\si_1,\ldots,\si_n\}$ is  a $\mu$-set for $\ExUn_{\V}(\Si,X)$, 
there exists $i\in\{1,\ldots,n\}$ such that $\ker(h_{\V}\circ\si_i)\subseteq\ker(h_{\V}\circ\si)$. 
Hence  $(\f_i,\p_i)\in\ker(h_{\V}\circ\si)$, and the result follows. 
\end{proof}

A finite set $\Si$ of $\lang$-identities is said to be {\em admissibly reducible in $\V$} if whenever $\Si\imp\De$ is 
admissible in $\V$ for some non-empty set of $\lang$-identities $\De$, then there exists $\f\eq\p\in\De$ such that 
$\Si\imp\f\eq\p$ is admissible in $\V$.

\begin{corollary}
Let $\Si$ be a finite set of $\lang$-identities. If 
$\tp(\ExUn_{\V}(\Si))=1$, then $\Si$ is admissibly reducible in $\V$. 
Conversely, if $\tp(\ExUn_{\V}(\Si))\in\{1,\omega\}$ and $\Si$ is admissibly reducible in $\V$ then $\tp(\ExUn_{\V}(\Si))=1$.
\end{corollary}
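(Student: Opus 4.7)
The plan is to dispatch the first implication as a near-immediate consequence of Theorem~\ref{Theo:TypeAndSizeAdm} and to establish the converse by contradiction, constructing an explicit counterexample to admissible reducibility from any nontrivial finite $\mu$-set. For the first direction, I will fix a singleton $\mu$-set $\{\si_1\}$ for $\ExUn_\V(\Si)$ and apply Theorem~\ref{Theo:TypeAndSizeAdm} to any $\V$-admissible clause $\Si\imp\De$ with $\De$ non-empty. This delivers $\De'\subseteq\De$ with $|\De'|\le 1$ and $\Si\imp\De'$ admissible. Since $\ExUn_\V(\Si)\neq\emptyset$ makes $\Si$ $\V$-unifiable, $\Si\imp\emptyset$ cannot be admissible, forcing $|\De'|=1$ and yielding the required $\f\eq\p\in\De$ with $\Si\imp\f\eq\p$ admissible.

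For the converse, I will suppose toward a contradiction that $\tp(\ExUn_\V(\Si))=\omega$ and fix a $\mu$-set $\{\si_1,\ldots,\si_n\}$ with $n\ge 2$. Writing $K_i := \ker(h_\V\circ\si_i)$, pairwise $\eleq_\V$-incomparability ensures that for each $i$ and any choice of $j_i\ne i$ one has $K_i\not\subseteq K_{j_i}$, so I can pick an identity $\delta_i\in K_i\setminus K_{j_i}$. Setting $\De := \{\delta_1,\ldots,\delta_n\}$, I will verify two things: admissibility of $\Si\imp\De$, by observing that any $\V$-unifier $\tau$ of $\Si$ admits, by completeness of the $\mu$-set, an index $k$ with $K_k\subseteq\ker(h_\V\circ\tau)$ and hence unifies $\delta_k\in\De$; and non-admissibility of every $\Si\imp\delta_i$, witnessed by $\si_{j_i}$ itself, a $\V$-unifier of $\Si$ that by construction does not unify $\delta_i$. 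This contradicts admissible reducibility, leaving $\tp(\ExUn_\V(\Si))=1$ as the only option compatible with the hypothesis.

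The step most worth scrutiny is the construction of $\De$ in the converse: one must exploit incomparability in exactly the right way to produce identities that simultaneously witness admissibility of $\Si\imp\De$ and non-admissibility of each $\Si\imp\delta_i$. The surrounding bookkeeping around variable sets, since the $\mu$-set lives over $\var(\Si)$ while admissibility is phrased via substitutions on $\Fml(\omega)$, is handled smoothly by Lemmas~\ref{Lem:Independence} and~\ref{Lemma:Crucial} given that each $\delta_i$ already has variables in $\var(\Si)$.
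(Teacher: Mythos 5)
Your proposal is correct and follows essentially the same route as the paper: the first implication via Theorem~\ref{Theo:TypeAndSizeAdm}, and the converse by contradiction, extracting from a $\mu$-set $\{\si_1,\ldots,\si_n\}$ with $n\ge 2$ identities $\delta_i\in\ker(h_\V\circ\si_i)\setminus\ker(h_\V\circ\si_{j_i})$ so that $\Si\imp\{\delta_1,\ldots,\delta_n\}$ is admissible while no $\Si\imp\delta_i$ is. The only (harmless) difference is that you take one witness index $j_i$ per $i$, giving $|\De|=n$, where the paper uses all ordered pairs $i\neq j$.
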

\begin{proof}
The first claim follows immediately from the previous theorem. For the second claim, 
assume that $\tp(\ExUn_{\V}(\Si))\in\{\omega,1\}$ and that $\Si$ is admissibly reducible in $\V$. 
Then there exists a $\mu$-set $\{\si_1,\ldots,\si_n\}$ for $\ExUn_{\V}(\Si)$. 
For each $i,j\in\{1,\ldots, n\}$ such that $i\neq j$, consider $(\f_{ij},\p_{ij})\in\ker(h_{\V}\circ\si_i)\setminus\ker(h_{\V}\circ\si_j)$. Let $\De=\{\f_{ij}\eq\p_{ij}\mid i,j\in\{1,\ldots, n\} \mbox{ and }i\neq j\}$. 

Suppose that $n\neq 1$ and hence $\De\neq\emptyset$.
Since $\{\si_1,\ldots,\si_n\} $ is a $\mu$-set for $\ExUn_{\V}(\Si)$, by Corollary~\ref{Cor:AdmissibleExact}, it follows that 
$\Si\imp\De$ is admissible in $\V$. But, by assumption, there exists $\f_{ij}\eq\p_{ij}\in\De$ such that  $\Si\imp\f_{ij}\eq\p_{ij}$ 
is admissible in $\V$, contradicting the fact that $\V\not\models\si_j(\f_{ij})\eq\si_j(\p_{ij})$. 
We conclude that $n=1$, and hence that $\tp(\ExUn_{\V}(\Si))=1$.
\end{proof}

%%%%%%%%%%%%%%%%%%%%%%%%%%%%%%%%%%%%%%%%%%%%%%%%%%%%%%%%%%%%%%%%%%%%%%%%

\section{Algebraic Co-Exact Unifiers} \label{s:algebraic}

We turn our attention now to the algebraic interpretation of exact unification. 
Following~\cite{DeJongh82}, a finite set  of $\lang$-identities $\Si$ will be called {\em exact in $\V$} if there exists 
a substitution $\si\colon\FML\rightarrow\FML$ such that for all $\a,\b\in\Fml(\var(\Si))$, 
\[
\V\models \Si\imp\{\a\eq\b \}
\quad \Leftrightarrow \quad 
\V\models \si(\a)\eq \si(\b).
\]
Note that by definition every exact set of identities is $\V$-unifiable.

Given a finite set of $\lang$-identities $\Si$ and a finite set of variables $X \supseteq \var(\Si)$, 
let $\rho_{(\Si,X,\V)}\colon\FreeV(X)\rightarrow \FpV(\Si,X)$ be the canonical quotient homomorphism 
from the free algebra $\FreeV(X)$ to the finitely presented algebra $\FpV(\Si,X)$.

\begin{lemma}\label{Lem:ExactAsSubAlg}
A finite set $\Si$ of $\lang$-identities is exact in $\V$ if and only if 
\[
\FpV\bigl(\Si,\var(\Si)\bigr)\in\cop{IS}\bigl(\FreeV(\omega)\bigr).
\]
\end{lemma}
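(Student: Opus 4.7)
The plan is to show that the stated condition is precisely a reformulation of exactness via the universal property of the finitely presented algebra $\FpV(\Si,\var(\Si))$. The key observation is that for $\a,\b \in \Fml(\var(\Si))$, we have $\V \models \Si \imp \{\a \eq \b\}$ if and only if $\rho_{(\Si,\var(\Si),\V)}(h_\V(\a)) = \rho_{(\Si,\var(\Si),\V)}(h_\V(\b))$ in $\FpV(\Si,\var(\Si))$; this is a standard consequence of the fact that homomorphisms from $\FpV(\Si,\var(\Si))$ into algebras of $\V$ are in bijection with assignments $\var(\Si) \to A$ satisfying $\Si$. I would state this briefly at the start of the proof and use it throughout.

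For the forward direction, suppose $\Si$ is exact in $\V$, witnessed by a substitution $\si \colon \FML \to \FML$. Define a map
\[
\hat\si \colon \FpV\bigl(\Si,\var(\Si)\bigr) \to \FreeV(\omega)
\quad \mbox{by} \quad
\rho_{(\Si,\var(\Si),\V)}(h_\V(\a)) \longmapsto h_\V(\si(\a)).
\]
Well-definedness and injectivity of $\hat\si$ each follow from one direction of the exactness equivalence: if $\rho h_\V(\a) = \rho h_\V(\b)$, then $\V \models \Si \imp \{\a \eq \b\}$, so $\V \models \si(\a) \eq \si(\b)$, giving $h_\V(\si(\a)) = h_\V(\si(\b))$; the converse chain yields injectivity. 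That $\hat\si$ is a homomorphism is immediate since it is induced by the homomorphism $h_\V \circ \si$ and the generators of $\FpV(\Si,\var(\Si))$. Hence $\FpV(\Si,\var(\Si)) \in \cop{IS}(\FreeV(\omega))$.

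For the converse, fix an embedding $\iota \colon \FpV(\Si,\var(\Si)) \hookrightarrow \FreeV(\omega)$. Since $h_\V \colon \FML \to \FreeV(\omega)$ is surjective on generators (identity on $\omega$), for each $x \in \var(\Si)$ we may choose a term $\si(x) \in \FML$ with $h_\V(\si(x)) = \iota(\rho_{(\Si,\var(\Si),\V)}(h_\V(x)))$, and extend arbitrarily (say, as the identity) on variables outside $\var(\Si)$ to obtain a substitution $\si \colon \FML \to \FML$. A routine induction on the structure of $\a \in \Fml(\var(\Si))$ shows that $h_\V(\si(\a)) = \iota(\rho_{(\Si,\var(\Si),\V)}(h_\V(\a)))$. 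Then for any $\a,\b \in \Fml(\var(\Si))$, using injectivity of $\iota$:
\[
\V \models \si(\a) \eq \si(\b)
\; \Leftrightarrow \; h_\V(\si(\a)) = h_\V(\si(\b))
\; \Leftrightarrow \; \rho h_\V(\a) = \rho h_\V(\b)
\; \Leftrightarrow \; \V \models \Si \imp \{\a \eq \b\}.
\]

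The proof is largely bookkeeping; the only delicate point is the preliminary fact relating $\V \models \Si \imp \{\a \eq \b\}$ to equality in $\FpV(\Si,\var(\Si))$, which I would either cite as standard universal-algebra fact or dispatch in a short lemma. Everything else is a direct verification that the two descriptions—exactness via $\si$ and embeddability of the finitely presented algebra into the countably generated free algebra—convey the same data.
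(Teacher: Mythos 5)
Your proof is correct and follows essentially the same route as the paper: the forward direction builds an embedding of $\FpV(\Si,\var(\Si))$ into $\FreeV(\omega)$ from the substitution witnessing exactness, and the converse reads off the substitution from a given embedding via preimages under $h_\V$, with both directions resting on the standard identification of $\V \models \Si \imp \{\a \eq \b\}$ with equality in the finitely presented algebra. The only (immaterial) difference is that the paper obtains the embedding by lifting $\si$ to an endomorphism of $\FreeV(\omega)$ and factoring through the quotient, whereas you define it directly on cosets and verify well-definedness by hand.
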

\begin{proof}
($\Rightarrow$) Let $X=\var(\Si)$ and let  $\si\colon\FML\rightarrow\FML$ be a substitution 
such that for all $\a,\b\in\Fml(X)$, $\V\models \Si\imp\{\a\eq\b\}$
iff $\V\models \si(\a)\eq \si(\b)$. That is $\V\models \Si\imp\{\a\eq\b\}$ iff $h_{\V}(\si(\a))=h_{\V}(\si(\b))$. 
There is a unique homomorphism $\si'\colon \FreeV(\omega)\rightarrow \FreeV(\omega)$ such that $h_{\V}\circ\si=\si'\circ h_{\V}$ 
and hence  $h_{\V}(\Si)\subseteq \ker(\si')$.

 Let $\iota\colon\FreeV(X)\rightarrow \FreeV(\omega)$ be the inclusion map.
 Since $h_{\V}(\Fml(X))=\FreeV(X)$, it follows that $h_{\V}(\Si)\subseteq \ker(\si'\circ\iota)=\ker(\si')\cap \FreeV(X)^2$. 
There exists a unique $s\colon \FpV(\Si,X)\rightarrow\FreeV(\omega)$ such that $s\circ\rho_{(\Si,X,\V)}=\si '\circ\iota$. 
Let $a,b\in \FpV(\Si,X)$ be such that $s(a)=s(b)$ and $\a,\b\in\Fml(X)$ such that  $\rho_{(\Si,X,\V)}(h_{\V}(\a))=a$ and $\rho_{(\Si,X,\V)}(h_{\V}(\b))=b$. Then 
$$h_{\V}\circ\si(\a)=\si'\circ h_{\V}(\a)=(s\circ\rho_{(\Si,X,\V)}\circ h_{\V})(\a)=s(a)=s(b)$$
$$(s\circ\rho_{(\Si,X,\V)}\circ h_{\V})(\b)=\si'\circ h_{\V}(\b)=h_{\V}\circ\si_X(\b).$$
By assumption, $\Si\mdl{\V}\a\eq\b$. So $a=\rho_{(\Si,X,\V)}(h_{\V}(\a))=\rho_{(\Si,X,\V)}(h_{\V}(\b))=b$; i.e., 
$s$ is a one-to-one homomorphism. Hence, $\FpV(\Si,X)\in\cop{IS}(\FreeV(\omega))$.

($\Leftarrow$) Let $X=\var(\Si)$, and let 
$s\colon \FpV(\Si,X)\rightarrow\FreeV(\omega)$ be a  one-to-one homomorphism. 
Let $\si\colon\FML\rightarrow\FML$ be the unique homomorphism determined by its value on the variables as follows:
\[
\si(x)=\begin{cases}
\a_x& \mbox{ if }x\in X,\\
x&\mbox{ otherwise},
\end{cases}
\]
 where $\a_x$ is any formula 
such that $s(\rho_{(\Si,X,\V)}(h_{\V}(x)))=h_{\V}(\a_x)$. 
By induction on formula complexity, $h_{\V}(\si(\f))=s(\rho_{(\Si,X,\V)}(h_{\V}(\f)))$ for each 
$\f\in\Fml(X)$. Thus, if $\a,\b\in\Fml(X)$ are such that $h_{\V}(\si(\a))=h_{\V}(\si(\b))$, 
then $s(\rho_{(\Si,X,\V)}(h_{\V}(\a)))=s(\rho_{(\Si,X,\V)}(h_{\V}(\b)))$. Finally from the injectivity of  $s$ it follows that $\rho_{(\Si,X,\V)}(h_{\V}(\a))=\rho_{(\Si,X,\V)}(h_{\V}(\b))$, equivalently,
$\V\models \Si\imp\{\a\eq\b\}$. 
%(Here observe that withouth the assumption $\alpha,\beta\in \Fml(X)$ the application of  $\rho_{(\Si,X,\V)}$ to $h_{\V}(\alpha)$ and $h_{\V}(\beta)$ is not correct. I am not saying it isn't valid without the constraint $\alpha,\beta\in \Fml(X)$ but we have to revise the proof.) The reason why I am not completely confident that  $\alpha,\beta$ could be considered in $\FML$ is because this will imply that not only $\FpV(\Si,\var(\Si))\in\cop{IS}(\FreeV(\omega))$ but indeed $\FpV(\Si,Z)\in\cop{IS}(\FreeV(\omega))$ for each $\var(\Si)\subseteq Z\subseteq\omega$. And the implication  $\FpV(\Si,\var(\Si))\in\cop{IS}(\FreeV(\omega))\Rightarrow\FpV(\Si,Z)\in\cop{IS}(\FreeV(\omega))$ is not totally clear for me. Perhaps it is trivial but I don't see it at the moment.}
\end{proof}

We call an algebra~$\E$ {\em exact} in $\V$ if it is isomorphic to a finitely generated subalgebra of 
$\FreeV(\omega)$. By Lemma~\ref{Lem:ExactAsSubAlg} (see also~\cite{DeJongh82}), a finite  set of identities 
$\Si$ is exact iff the finitely presented algebra $\FpV(\Si,\var(\Si))$ is exact.

Given $\A \in \mathsf{FP}(\V)$, an onto homomorphism 
$u \colon \A \to \E$ is called a {\em coexact unifier} for~$\A$ if  $\E$ is exact.  
 Coexact unifiers are ordered in the same way as algebraic unifiers, that is, if $u_i \colon \A \to \E_i$ for $i=1,2$ are coexact unifiers for 
 $\A$, then $u_1 \le u_2$, if there exists a homomorphism $f \colon \E_1 \to \E_2$ such that $f \circ u_1 = u_2$. 

 Let $\EUAlg_{\V}(\A)$ be the  set of coexact unifiers for $\A$ preordered by $\leq$.  If $\EUAlg_{\V}(\A)\neq\emptyset$,  then the 
 {\em exact type} of $\A$ is defined as the type of $\EUAlg_{\V}(\A)$. The {\em exact algebraic unification type} 
 of $\V$ is the maximal exact type of $\A$ in $\V$ such that $\EUAlg_{\V}(\A) \neq\emptyset$.

 We obtain the following Ghilardi-style result. 

\begin{theorem}\label{Theo:EqualTypes}
Let $\V$ be an equational class and $\Si$ a finite set of $\V$-unifiable $\lang$-identities. Then for any $X\supseteq \var(\Si)$,
\[
\tp\bigl(\ExUn_{\V}(\Si)\bigr)=\tp\bigl(\ExUn_{\V}(\Si,X)\bigr)=\tp\bigl(\EUAlg_{\V}(\FpV(\Si,X)\bigr).
\]
Hence the exact unification type and the exact algebraic unification type of $\V$ coincide. 
\end{theorem}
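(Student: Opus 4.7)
The plan is to derive the first equality directly from Lemma~\ref{Lem:Independence}, and to deduce the second equality from Lemma~\ref{Lemma:EquivPreorder} by constructing an equivalence of preordered sets
\[
e\colon\ExUn_{\V}(\Si,X)\to\EUAlg_{\V}(\FpV(\Si,X)).
\]
Given a $\V$-unifier $\si\colon\Fml(X)\to\FML$ of $\Si$, the fact that $\si$ unifies every identity in $\Si$ lets us factor $h_{\V}\circ\si$ successively through $h_{\V}\colon\Fml(X)\to\FreeV(X)$ and through the canonical quotient $\rho_{(\Si,X,\V)}\colon\FreeV(X)\to\FpV(\Si,X)$, producing a homomorphism $\tilde\si\colon\FpV(\Si,X)\to\FreeV(\omega)$ satisfying $\tilde\si\circ\rho_{(\Si,X,\V)}\circ h_{\V}=h_{\V}\circ\si$. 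Its image $\E_{\si}\subseteq\FreeV(\omega)$ is finitely generated (since $X$ is finite) and therefore exact, so the corestriction of $\tilde\si$ to $\E_{\si}$ yields a coexact unifier which I take as $e(\si)$.

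For clause (2) of Lemma~\ref{Lemma:EquivPreorder}, the identity $\tilde\si\circ(\rho_{(\Si,X,\V)}\circ h_{\V})=h_{\V}\circ\si$, combined with the surjectivity of $\rho_{(\Si,X,\V)}\circ h_{\V}$, gives
\[
\ker(h_{\V}\circ\si_{1})\subseteq\ker(h_{\V}\circ\si_{2}) \quad\Leftrightarrow\quad \ker(\tilde\si_{1})\subseteq\ker(\tilde\si_{2}),
\]
since preimages under a surjective homomorphism reflect and preserve inclusion of congruences. The right-hand side, by the homomorphism theorem applied to the surjection $e(\si_{1})$, is equivalent to the existence of a homomorphism $f\colon\E_{\si_{1}}\to\E_{\si_{2}}$ with $f\circ e(\si_{1})=e(\si_{2})$. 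Thus $\si_{2}\eleq_{\V}\si_{1}$ holds if and only if $e(\si_{2})\le e(\si_{1})$, establishing~(2).

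The main obstacle is clause~(1): every coexact unifier $u\colon\FpV(\Si,X)\to\E$ must be equivalent to some $e(\si)$. Since $\E$ is exact, fix an embedding $\iota\colon\E\hookrightarrow\FreeV(\omega)$, and for each $x\in X$ choose $\si(x)\in\FML$ with $h_{\V}(\si(x))=\iota(u(\rho_{(\Si,X,\V)}(h_{\V}(x))))$, which is possible because $h_{\V}\colon\FML\to\FreeV(\omega)$ is surjective. Extending $\si$ to $\Fml(X)$ by the universal mapping property, both sides of the claimed equality $h_{\V}\circ\si=\iota\circ u\circ\rho_{(\Si,X,\V)}\circ h_{\V}$ are homomorphisms agreeing on the generators in $X$, hence equal; this makes $\si$ a $\V$-unifier of $\Si$ and forces $\tilde\si=\iota\circ u$. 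Consequently $\E_{\si}=\iota(\E)$ and $e(\si)=\iota{\upharpoonright}_{\E}\circ u$, where $\iota{\upharpoonright}_{\E}\colon\E\to\iota(\E)$ is an isomorphism, so $u$ and $e(\si)$ are mutually related under $\le$. The delicate point is precisely this inversion: lifting the algebraic datum $u$ back to a syntactic substitution requires both that $\E$ sit inside $\FreeV(\omega)$ and that $h_{\V}$ admit enough preimages, and these are exactly the ingredients supplied by exactness and by the universal property of $\FML$.
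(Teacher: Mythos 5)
Your proposal is correct and follows essentially the same route as the paper: both arguments dispose of the first equality via Lemma~\ref{Lem:Independence}, then build the map $\si\mapsto u_{\si}$ (your $e(\si)$) by factoring $h_{\V}\circ\si$ through $\rho_{(\Si,X,\V)}$ and corestricting to the finitely generated image, verify condition (2) of Lemma~\ref{Lemma:EquivPreorder} via surjectivity of $\rho_{(\Si,X,\V)}\circ h_{\V}$ and the homomorphism theorem, and establish condition (1) by lifting a coexact unifier $u$ through an embedding $\iota\colon\E\to\FreeV(\omega)$ and choosing $h_{\V}$-preimages of the generators. No substantive differences to report.
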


\begin{proof}
Consider $\si\colon\Fml(X)\to \Fml(Y)$ in $\ExUn_{\V}(\Si,X)$.
Let $\hat{\si}\colon\FreeV(X)\rightarrow h_{\V}(\si(\Fml(X)))$ be the unique homomorphism determined by its value on the variables as follows:
\[
\hat{\si}(h_{\V}(x))=
h_{\V}(\si(x)) \mbox{ for each }x\in X.
\]
Then $\Si\subseteq\ker(\hat{\si}\circ h_{\V})$, and there exists a homomorphism $u_{\si}\colon \FpV(\Si,X) \to h_{\V}(\FreeV(Y))$ such that 
\begin{equation}\label{Eq:SyntacticAlgebraicSquare}
u_{\si}\circ \rho_{\Si,X,\V} = h_{\V}\circ\si.
\end{equation} 
Therefore, the map $u_{\si}$ is onto $h_{\V}(\si(\Fml(X)))$. Since $h_{\V}(\si(\Fml(X)))$ is a finitely generated subalgebra of $\FreeV(Y)$, \ $u_{\si}\in \EUAlg_{\V}(\FpV(\Si,X))$.

Let $u \colon\FpV(\Si,X)\to \E$ be a coexact-unifier for $\FpV(\Si,X)$. 
Since $\E$ is exact, there exist some finite set $Y$ and a one-to-one homomorphism 
$\iota\colon \E\to \FreeV(Y)$. 
For each $x\in X$, let $t_x\in\Fml(Y)$ such that 
$h_{\V}(t_x)=\iota(u(\rho_{\Si,X,\V}(x)))$.
Let $\si\colon \Fml(X)\to \Fml(Y)$ be the substitution defined
by $\si(x)=t_x$ for each $x\in X$. 
It is straightforward to check that 
$\iota\circ u=u_{\si}$ and $\iota(\E)= u_{\si}(\FpV(\Si,X))$. Since $\iota$ is one-to-one, there exists a homomorphism $\eta\colon u_{\si}(\FpV(\Si,X))\to \E$  that is the inverse of $\iota$. Therefore $u$ and $u_{\si}$ are equivalent in the preorder $\EUAlg_{\V}(\FpV(\Si,X))$.

By \eqref{Eq:SyntacticAlgebraicSquare}, for each $\si_1,\si_2\in \ExUn_{\V}(\Si,X)$
\begin{eqnarray*}
\si_2\eleq_{\V} \si_1&\Leftrightarrow& \ker(h_{\V}\circ\si_1)\subseteq\ker(h_{\V}\circ\si_2)\\
&\Leftrightarrow&
\ker(u_{\si_1}\circ \rho_{\Si,X,\V})\subseteq\ker(\si_2\circ \rho_{\Si,X,\V})\\
&\Leftrightarrow&\ker(u_{\si_1})\subseteq \ker(u_{\si_2}).
\end{eqnarray*}
Let us denote the codomains of $u_{\si_1}$ and $u_{\si_2}$ by $\E_1$ and $\E_2$, respectively.
Since $u_{\si_1}$ is onto~$\E_1$, \ $\ker(u_{\si_1})\subseteq \ker(u_{\si_2})$ iff there exists 
$h\colon \E_1\to \E_2$ such that $h\circ u_{\si_1}=u_{\si_2}$, that is $u_{\si_2}\leq u_{\si_1}$.

We have proved that the assignment $\si \mapsto u_{\si}$ determines an equivalence between the preorders $\ExUn_{\V}(\Si,X)$ and $\EUAlg_{\V}(\FpV(\Si,X))$. 
Hence, the result follow by Lemma~\ref{Lemma:EquivPreorder}.
\end{proof}

In the remainder of this section we present some consequences of the algebraic description of exact unification. 
Given an algebra  $\A$ in  $\V$, let $\cg_e(\A)$ denote the set of congruences $\theta$ of $\A$ such that the quotient $\A/\theta$ is exact; i.e., 
\[
\cg_e(\A)=\{\theta\in\cg(\A)\mid \A/\theta\in\cop{IS}(\FreeV(\omega))\}.
\]

\begin{theorem}
For any $\A \in \mathsf{FP}(\V)$:
\begin{itemize}
\item[(i)] given any homomorphism $u\colon A\to B$, 			
\[
			(u,u(\A)) \in \EUAlg_{\V}(\A) \quad \Leftrightarrow \quad \ker(u)\in \cg_e(\A).
			\]
\item[(ii)] $(u,\B),(v,\alg{C})\in \EUAlg_{\V}(\A)$ are such that $u\leq v$ iff $\ker(v)\subseteq \ker(u)$.
\end{itemize}
Hence $\ker\colon \EUAlg_{\V}(\A)\to \cg(\A)$ determines an equivalence between the preordered set $\EUAlg_{\V}(\A)$ 
and the poset $(\cg_e(\A),\supseteq)$.
\end{theorem}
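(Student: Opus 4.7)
The plan is to dispatch (i) and (ii) directly from the homomorphism theorem and the universal property of quotients, then assemble the final equivalence of preorders by invoking Lemma~\ref{Lemma:EquivPreorder}. For (i), I observe that the corestriction $u\colon\A\to u(\A)$ is automatically surjective, so $(u,u(\A))$ is a coexact unifier precisely when $u(\A)$ is exact. By the homomorphism theorem $u(\A)\cong\A/\ker(u)$, so this is equivalent to $\A/\ker(u)\in\cop{IS}(\FreeV(\omega))$, which is exactly the condition $\ker(u)\in\cg_e(\A)$.

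For (ii), the forward direction is immediate from the definition of $\leq$ on coexact unifiers (as used in the proof of Theorem~\ref{Theo:EqualTypes}): if $u\leq v$, there is a homomorphism $f\colon\alg{C}\to\B$ with $f\circ v=u$, so any pair identified by $v$ is identified by $u$, giving $\ker(v)\subseteq\ker(u)$. For the converse, the essential point is that $v\colon\A\to\alg{C}$ is surjective since $(v,\alg{C})\in\EUAlg_{\V}(\A)$. Hence, when $\ker(v)\subseteq\ker(u)$, the universal property of the quotient $\A\twoheadrightarrow\A/\ker(v)\cong\alg{C}$ produces a unique $f\colon\alg{C}\to\B$ with $f\circ v=u$, yielding $u\leq v$.

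The concluding claim then follows by applying Lemma~\ref{Lemma:EquivPreorder} to the map $\ker\colon\EUAlg_{\V}(\A)\to(\cg_e(\A),\supseteq)$. Condition~(2) of that lemma is precisely (ii), read as $u_1\leq u_2$ iff $\ker(u_1)\leq\ker(u_2)$ in the $\supseteq$-preorder. For condition~(1), given $\theta\in\cg_e(\A)$, the canonical quotient map $\rho_\theta\colon\A\to\A/\theta$ is surjective with exact codomain, so lies in $\EUAlg_{\V}(\A)$, with $\ker(\rho_\theta)=\theta$. There is no genuine obstacle here; the only subtle points are keeping the direction of the preorder on $\cg_e(\A)$ consistent with the order on coexact unifiers, and recognising that surjectivity of $v$---built into the definition of a coexact unifier---is exactly what enables the factorisation in the converse direction of~(ii).
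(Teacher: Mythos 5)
Your proposal is correct and follows essentially the same route as the paper: (i) via the homomorphism theorem identifying $u(\A)$ with $\A/\ker(u)$, and (ii) via the factorisation criterion for surjective homomorphisms, with the final claim assembled through Lemma~\ref{Lemma:EquivPreorder}. The only difference is that you spell out condition (1) of that lemma (realising each $\theta\in\cg_e(\A)$ as the kernel of the canonical quotient map), which the paper leaves implicit; this is a harmless and correct addition.
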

\begin{proof}
(i) $(u,u(\A))\in \EUAlg_{\V}(\A)$ iff  $u(\A)\in\cop{IS}(\FreeV(\omega))$ iff $\ker(u)\in \cg_e(\A)$.

(ii) $u\leq v$ iff there exists a homomorphism $f\colon \alg{C}\to \B$ such that $f\circ v=u$ iff 
(as $v$ is surjective) $\ker(v)\subseteq \ker(u)$.
\end{proof}

\begin{corollary}\label{cor:Etype-Cong}
For each finitely presented algebra $\A$  in $\V$, 
\[
\textstyle\tp(\EUAlg_{\V}(\A))=\begin{cases}
1&\mbox{ if }|\min(\cg_e(\A))|=1;\\
\omega& \mbox{ if }1<|\min(\cg_e(\A))|<\infty;\\
\infty& \mbox{ if }\infty\leq|\min(\cg_e(\A))| ;\\
0& \mbox{ if } \min(\cg_e(\A))=\emptyset.
\end{cases} 
\] 
\end{corollary}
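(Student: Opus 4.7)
The plan is to apply the preceding theorem together with Lemma~\ref{Lemma:EquivPreorder}: the kernel map gives an equivalence of preorders between $\EUAlg_{\V}(\A)$ and the poset $(\cg_e(\A),\supseteq)$, and equivalent preorders have the same type. The task thus reduces to computing $\tp((\cg_e(\A),\supseteq))$ from the structure of $\cg_e(\A)$.

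The key step is to show that any $\mu$-set of $(\cg_e(\A),\supseteq)$ coincides with $\min(\cg_e(\A))$, the set of $\subseteq$-minimal congruences in $\cg_e(\A)$. If $\theta$ belongs to a $\mu$-set $M$ and some $\theta'\subsetneq\theta$ lies in $\cg_e(\A)$, then completeness of $M$ in $(\cg_e(\A),\supseteq)$ provides $\theta''\in M$ with $\theta''\subseteq\theta'$, hence $\theta''\subsetneq\theta$, contradicting the antichain condition; so every element of $M$ is $\subseteq$-minimal. Conversely, any $\subseteq$-minimal $\theta\in\cg_e(\A)$ must lie in $M$, since completeness provides $\theta''\in M$ with $\theta''\subseteq\theta$, and minimality forces $\theta''=\theta$.

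Since distinct $\subseteq$-minimal elements of $\cg_e(\A)$ are automatically incomparable, $\min(\cg_e(\A))$ is a $\mu$-set precisely when it is complete in $(\cg_e(\A),\supseteq)$, i.e., when every $\theta\in\cg_e(\A)$ contains a $\subseteq$-minimal element. The four cases then follow by reading off $|\min(\cg_e(\A))|$: cardinality $1$ yields type~$1$, finite cardinality greater than $1$ yields type~$\omega$, infinite cardinality yields type~$\infty$, and emptiness (together with $\cg_e(\A)\neq\emptyset$, implicit in the type being defined) yields type~$0$, because in that case no $\mu$-set can exist.

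The main obstacle is verifying that $\min(\cg_e(\A))$ is complete in $(\cg_e(\A),\supseteq)$ whenever it is non-empty, i.e., that there is no $\theta\in\cg_e(\A)$ lying strictly above every element of an unbounded descending chain in $\cg_e(\A)$. This is a well-foundedness-type condition that must be justified from the algebraic structure of exact quotients of finitely presented algebras.
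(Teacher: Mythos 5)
Your reduction is the right one and, as far as the paper is concerned, essentially the only one: the corollary is stated there without proof, as an immediate consequence of the preceding theorem, which identifies $\EUAlg_{\V}(\A)$ with the poset $(\cg_e(\A),\supseteq)$ up to equivalence of preorders, so that Lemma~\ref{Lemma:EquivPreorder} reduces everything to computing $\tp\bigl((\cg_e(\A),\supseteq)\bigr)$. Your second paragraph is correct and does most of the work: any $\mu$-set of $(\cg_e(\A),\supseteq)$ must equal $\min(\cg_e(\A))$, which immediately gives the type-$0$ case when $\min(\cg_e(\A))=\emptyset$ (and $\cg_e(\A)\neq\emptyset$, as needed for the type to be defined), and gives the stated cardinalities in the other three cases \emph{provided} a $\mu$-set exists.

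The obstacle you flag in your last paragraph is genuine, and you have not closed it: the three non-zero cases require that $\min(\cg_e(\A))$, when non-empty, be complete, i.e.\ that every $\theta\in\cg_e(\A)$ contain a $\subseteq$-minimal member of $\cg_e(\A)$. This is not a formal triviality about posets: a subset of a congruence lattice consisting of one minimal element together with a strictly descending chain incomparable to it and without lower bound in the subset has $|\min|=1$ but type $0$. Ruling this out for $\cg_e(\A)$ would follow, via Zorn's lemma, from closure of $\cg_e(\A)$ under intersections of descending chains, but that is itself not obvious: the quotient by such an intersection embeds only into a \emph{power} of $\FreeV(\omega)$, and there is no general reason for a finitely generated member of $\cop{ISP}\bigl(\FreeV(\omega)\bigr)$ to lie in $\cop{IS}\bigl(\FreeV(\omega)\bigr)$. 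The paper offers no argument on this point either, and in its actual uses of the corollary the issue evaporates: $\cg_e(\A)$ is finite in Corollary~\ref{Cor:LocFin} and a chain in Corollary~\ref{Cor:TypeUnitary}, and in both situations a non-empty $\min(\cg_e(\A))$ is automatically complete. So either the corollary should be read with the type-$0$ clause strengthened to ``$\min(\cg_e(\A))$ is not coinitial in $(\cg_e(\A),\subseteq)$'', or the missing well-foundedness of $\cg_e(\A)$ has to be supplied; as written, your proof establishes the statement only under that reading or with that additional lemma.
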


\begin{corollary}\label{Cor:LocFin}
Let $\V$ be a locally finite equational class. Then $\tp(\EUAlg_{\V}(\A))$ is  finite 
for  each $\A \in \mathsf{FP}(\V)$.  
Hence $\V$ has unitary or finitary exact unification type.
\end{corollary}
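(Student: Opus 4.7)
The strategy is to combine the congruence-theoretic reformulation provided by Corollary~\ref{cor:Etype-Cong} with the fact that local finiteness forces every finitely presented algebra to have a finite congruence lattice.

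First I would recall that in a locally finite equational class every finitely generated algebra is finite; since each $\A\in\mathsf{FP}(\V)$ is, a fortiori, finitely generated, $\A$ must be finite. Consequently the congruence lattice $\cg(\A)$ is finite, its subset $\cg_e(\A)$ is also finite, and therefore $\min(\cg_e(\A))$ is finite (possibly empty).

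Next I would feed this into Corollary~\ref{cor:Etype-Cong}: the only branch of the case analysis producing an infinite value for $\tp(\EUAlg_{\V}(\A))$ requires $\min(\cg_e(\A))$ to be infinite, which has just been ruled out. Hence $\tp(\EUAlg_{\V}(\A))\in\{0,1,\omega\}$ for every $\A\in\mathsf{FP}(\V)$, which in particular is finite, establishing the first assertion.

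For the second assertion, I would observe that the exact unification type of $\V$ is the maximum taken over those $\A\in\mathsf{FP}(\V)$ with $\EUAlg_{\V}(\A)\neq\emptyset$. For any such $\A$, part~(i) of the theorem preceding Corollary~\ref{cor:Etype-Cong} yields a congruence in $\cg_e(\A)$, so $\cg_e(\A)$ is a non-empty finite poset and hence its set of minimal elements is non-empty. Thus the nullary case of Corollary~\ref{cor:Etype-Cong} is excluded, leaving type $1$ or $\omega$, and the same therefore holds for the maximum. I don't anticipate any genuine obstacle: once the congruence-theoretic description of the type is in hand, the proof is essentially a bookkeeping argument built on the finiteness of $\cg(\A)$.
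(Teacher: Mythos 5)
Your proposal is correct and follows essentially the same route as the paper: local finiteness gives that $\A$ is finite, hence $\min(\cg_e(\A))$ is finite, and Corollary~\ref{cor:Etype-Cong} then yields unitary or finitary type. Your extra observation that a non-empty finite $\cg_e(\A)$ must have minimal elements (ruling out the nullary case) is a detail the paper leaves implicit, but it is the right justification.
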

\begin{proof}
As $\V$ is locally finite, each finitely generated algebra in $\V$ is finite. In particular $\A$ is finite. Since 
$|\min(\cg_e(\A))|\leq |\mathcal{P}(A\times A)|=2^{|A|^2}$, where $\mathcal{P}(A\times A)$ denotes the 
powerset of $A\times A$, by Corollary~\ref{cor:Etype-Cong}, $\tp(\EUAlg_{\V}(\A))$ is either unitary or finitary.
\end{proof}

\begin{corollary}\label{Cor:TypeUnitary}
Let $\A$ be a finitely presented algebra in $\V$ such that its congruences are totally ordered. 
If $\EUAlg_{\V}(\A) \neq \emptyset$, then it is totally ordered and $\tp(\EUAlg_{\V}(\A))\in\{1,0\}$. 
In particular, if $\A$  is simple, then either  $\EUAlg_{\V}(\A)$ is empty or $\tp(\EUAlg_{\V}(\A))=1$.
\end{corollary}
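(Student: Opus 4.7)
The plan is to reduce everything to the preceding theorem, which tells us that $\EUAlg_{\V}(\A)$ is equivalent as a preordered set to $(\cg_e(\A),\supseteq)$, and then exploit the fact that the inclusion ordering on $\cg_e(\A)$ is inherited from $\cg(\A)$.

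First, I would note that by the equivalence of preorders $\ker\colon\EUAlg_{\V}(\A)\to(\cg_e(\A),\supseteq)$ from the previous theorem, together with Lemma~\ref{Lemma:EquivPreorder}, it suffices to verify the claims on the poset side. Since $\cg_e(\A)\subseteq\cg(\A)$ and the latter is totally ordered by hypothesis, the restriction $(\cg_e(\A),\supseteq)$ is also totally ordered; transporting back across the equivalence yields the totally ordered conclusion for $\EUAlg_{\V}(\A)$.

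Next, I would use Corollary~\ref{cor:Etype-Cong} to pin down the type from the cardinality of $\min(\cg_e(\A))$. In any totally ordered set two distinct minimal elements cannot coexist, so $|\min(\cg_e(\A))|\leq 1$; consequently the finitary and infinitary cases of Corollary~\ref{cor:Etype-Cong} are excluded, forcing $\tp(\EUAlg_{\V}(\A))\in\{1,0\}$. The assumption $\EUAlg_{\V}(\A)\neq\emptyset$ gives $\cg_e(\A)\neq\emptyset$, but by itself this does not guarantee a minimum element: a totally ordered nonempty set may fail to have a bottom, which is precisely the nullary case.

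For the simple case, the key observation is that $\cg(\A)=\{\Delta_A,\nabla_A\}$ has at most two elements and is trivially totally ordered, so the first part applies. Moreover $\cg_e(\A)$ is now a finite nonempty totally ordered set (assuming $\EUAlg_{\V}(\A)\neq\emptyset$), and every such set has a least element under $\supseteq$; hence $|\min(\cg_e(\A))|=1$ and Corollary~\ref{cor:Etype-Cong} yields $\tp(\EUAlg_{\V}(\A))=1$. The only potential subtlety is whether one has to worry about $\nabla_A$ defining an exact quotient; but this plays no role in the argument, since finiteness of $\cg_e(\A)$ alone is enough to guarantee the minimum exists once nonemptiness is assumed.
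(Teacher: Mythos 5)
Your proof is correct and follows exactly the route the paper intends (the corollary is stated without proof there): transport the total order across the $\ker$ equivalence to $(\cg_e(\A),\supseteq)$, observe that a totally ordered set has at most one minimal element so Corollary~\ref{cor:Etype-Cong} rules out the finitary and infinitary cases, and in the simple (hence finite congruence lattice) case use finiteness of the nonempty chain $\cg_e(\A)$ to secure a minimum and thus type $1$. Your remarks on why nonemptiness alone does not exclude the nullary case, and on the irrelevance of whether $\nabla_A$ gives an exact quotient, are both accurate.
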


%%%%%%%%%%%%%%%%%%%%%%%%%%%%%%%%%

\section{Case Studies} \label{s:case_studies}

\begin{table}[t]
\begin{center}
\begin{tabular}{|@{\ \ }c@{\ \ }|@{\ \ }c@{\ \ }|@{\ \ }c|}
\hline
Equational Class			& Unification Type 	& 	Exact Type\\
\hline\hline&&\\[-.35cm]
Boolean Algebras				&	Unitary		&	Unitary\\
\hline&&\\[-.35cm]
Heyting Algebras				&	Finitary		&	Finitary\\
\hline&&\\[-.35cm]
Semigroups					&	Infinitary		&	Infinitary or Nullary\\
\hline&&\\[-.35cm]
Modal algebras 				&	Nullary		&	Nullary\\
\hline 
&&\\[-.35cm]
Distributive Lattices			&	Nullary		&	Unitary\\
\hline&&\\[-.35cm]
Stone Algebras				&	Nullary		&	Unitary\\		
\hline&&\\[-.35cm]
Bounded Distributive Lattices	&	Nullary		&	Finitary\\
\hline&&\\[-.35cm]
Pseudocomplemented Distributive Lattices	& Nullary	& Finitary\\
\hline&&\\[-.35cm]
Idempotent Semigroups		&	Nullary		&	Finitary\\
\hline&&\\[-.35cm]
De Morgan Algebras		&	Nullary		&	Finitary\\
\hline&&\\[-.35cm]
Kleene Algebras		&	Nullary		&	Finitary\\
\hline&&\\[-.35cm]
MV-algebras					&	Nullary		&	Finitary\\
\hline&&\\[-.35cm]
Willard's Example					&	Infinitary		&	Finitary\\
\hline
\end{tabular}
\end{center}
\caption{Comparison of unification types and exact types}
\label{table}
\end{table}

Any unitary equational class such as the class of Boolean algebras also has exact 
unitary type, and any finitary equational class will have unitary or finitary exact type. 
For example, the class of Heyting algebras is finitary~\cite{Ghi99} and hence 
also has finitary exact type (the equation $x \lor y \eq \top$ has two most general exact unifiers 
as in Example~\ref{e:equnif}). 
Minor changes to the original proofs that the class of semigroups has infinitary unification type~\cite{Plo72} 
and that the class of modal algebras (for the logic $\lgc{K}$) has nullary unification type~\cite{Jer13} 
establish that the former has infinitary or nullary exact type and the latter has nullary exact type. 
Below we consider more interesting cases where the type changes, collecting 
the results in Table~\ref{table}.

\begin{example}[Distributive Lattices]
However, the class of 
distributive lattices, which is known to have nullary unification type~\cite{Ghi97}, has unitary exact 
type as all finitely presented distributive lattices are exact (see for example~\cite[Lemma~18]{CM13}). The classes of bounded distributive lattices \cite{Ghi97}, idempotent semigroups (or bands)  
 \cite{Ba86}, De Morgan, and 
Kleene algebras~\cite{BC201X}  are also nullary, but because all these classes are locally finite, they have at most 
-- and indeed, it can be shown via suitable cases, precisely -- finitary exact type.
\end{example}

\begin{example}[Pseudocomplemented Distributive Lattices]
The equational class $\mathfrak{B}_{\omega}$ of pseudocomplemented distributive 
lattices is the class of algebras $(B, \land, \lor, ^*, 0, 1)$ such that $(B, \land, \lor, 0, 1)$ is a 
bounded distributive lattice and $a \land b^* = a$ if and only if $a \land b = 0$ for all $a,b \in B$. 
For each $n\in \mathbb{N}$, let $\B_n=(B_n,\wedge,\vee,^{*},0,1)$ denote the finite Boolean algebra 
with $n$ atoms and let $\B_n'$ be the algebra obtained by adding a new top~$1'$ to the underlying lattice 
of $\B_n$ and endowing it with the unique operation making it into a pseudocomplemented 
distributive lattice. Let $\mathfrak{B}_{n}$ denote the subvariety of $\mathfrak{B}_{\omega}$ generated by $\B_n'$.
Lee proved in \cite{Lee70}, that the subvariety lattice of $\mathfrak{B}_{\omega}$ is 
\[
\mathfrak{B}_{0}\subsetneq\mathfrak{B}_{1}\subsetneq\cdots \subsetneq \mathfrak{B}_{n}\subsetneq\cdots \subsetneq \mathfrak{B}_{\omega}
\]
where $\mathfrak{B}_{0}$ and $\mathfrak{B}_{1}$ are the varieties of Boolean algebras and  Stone algebras, respectively.
We have already observed that the class of Boolean algebras has exact type $1$.
The case of Stone algebras is similar to  distributive lattices: $\mathfrak{B}_1$ 
has nullary unification~\cite{Ghi97} type; however, all finitely presented Stone algebras are exact 
(see~\cite[Lemma~20]{CM13}), so the class of Stone algebras has unitary exact type.

In \cite{Ghi97} it was proved that $\mathfrak{B}_{\omega}$ has nullary unification type, 
and the same result was proved in \cite{CaXX} for $\mathfrak{B}_n$ for each $n\geq2$. 
All these varieties are locally finite, so an application of Corollary~\ref{Cor:LocFin} proves that they 
have at most finitary unification type. 
It is easy to prove that $x\vee\neg x\eq \top \imp x\eq \top,\neg x\eq\bot$ is admissible in $\mathfrak{B}_{\omega}$  and $\mathfrak{B}_{n}$ for each $n\geq 2$ and that neither
$x\vee\neg x\eq \top \imp x\eq \top$  nor 
$x\vee\neg x\eq \top \imp \neg x\eq\bot$  are admissible in $\mathfrak{B}_{\omega}$ or $\mathfrak{B}_{n}$ with $n\geq 2$.
By Corollary~\ref{Cor:TypeUnitary}, the classes $\mathfrak{B}_{\omega}$ and $\mathfrak{B}_{n}$ with $n\geq 2$ have finitary type. 
\end{example}

\begin{example}[A Locally Finite Equational Class with Infinitary Unification Type] 
The following example of a locally finite equational class with infinitary unification type 
is due to R.~Willard (private communication). Consider a language with one binary 
operation, written as juxtaposition, and two constants $0$ and $1$. 
Let $\V$ be the  equational class  defined by 
\[
0x \eq x0 \eq 0, \quad 1 x \eq 0, \quad x(yz) \eq 0, \quad (x1)1 \eq x1,
\]
and, for each $n \in \mathbb{N}$, associating to the left,
\[
xyz_1z_2\ldots z_n y \eq xyz_1z_2\ldots z_n 1.
\]
Then up to equivalence, terms have the form (again associating to the left)
\[
0, \quad 1, \quad \mbox{or} \quad xy_1y_2\ldots y_n
\]
where $y_1,\ldots,y_n$  are variables or $1$ and all distinct, and $x$ is any variable. 
It is immediate that finitely generated free algebras are finite and hence that $\V$ is 
locally finite. Note also that $\{xy \eq 0\}$ has three most general exact unifiers 
\[
\si_1(x) = 1,\ \si_1(y) = y; \quad \si_2(x) = 0,\ \si_2(y) = y;\quad \si_3(x) = x,\ \si_3(y) = yz.
\]
So the exact unification type of $\V$ is finitary.

We now claim that the following set of identities has infinitary unification type:
\[
\Si = \{xy \eq x1\}.
\]
For each $n \in \mathbb{N}$ and distinct variables $z_1,\ldots,z_n$ different from $y$, consider the following $\V$-unifier of $\Si$:
\[
\si_n(x) = xyz_1 \ldots z_n, \quad \si_n(y) = y.
\]
Then the set $\{\si_n \mid n \in \mathbb{N}\}$ is a $\mu$-set for $\U{\V}{\Si}$. Moreover, it can be shown 
that no set of identities has nullary unification type.
\end{example}

\begin{example}[MV-algebras]
In \cite{MS13} it is proved that the equational class $\mathcal{MV}$ of MV-algebras has nullary unification type. 
This class is not locally finite, so we cannot apply Corollary~\ref{Cor:LocFin}. However, combining results 
from~\cite{Jer09b} and~\cite{Cab}, we can still prove that MV-algebras have finitary exact type.

%We observe that because finitely presented MV-algebras admit a presentation of the form 
%$\{\a\eq \top\}$ and \cite[Theorem~4.18]{Cab} proves that every admissible saturated formula 
%(\cite[Definition~3.1]{Jer09a}) is exact, \cite[Theorem~3.8]{Jer09b} effectively provides a bound on the 
%exact type of a finitely presented algebra. 

Let $\lang$ be the language of MV-algebras and $\Si$ a finite set of equations in $\FML$. 
Finitely presented MV-algebras admit a presentation of the form 
$\{\a\eq \top\}$, so there is no loss of generality in assuming that $\Si=\{\a\eq \top\}$. Let us fix $X=\var(\a)$ 
and $\A=\Fp_{\mathcal{MV}}(\{\a\eq\top\})$.
A combination of  \cite[Theorem~3.8]{Jer09b} and \cite[Theorem~4.18]{Cab} proves the following result.
There exist $\b_1,\ldots,\b_n\in\Fml(X)$ such that the following hold:
\begin{itemize}
\item[(i)] the rule $\{\a\eq\top\}\imp\{\b_1\eq\top,\ldots,\b_n\eq\top\}$ is admissible  in $\mathcal{MV}$;
\item[(ii)] $\{\b_i\eq\top\} \mdl{\mathcal{MV}} \a\eq\top$ for each $i\in\{1,\ldots,n\}$;
\item[(iii)] $\Fp_{\mathcal{MV}}(\{\b_i\eq\top\})$ is exact for each $i\in\{1,\ldots,n\}$.
\end{itemize}
Defining $\B_i=\Fp_{\mathcal{MV}}(\{\b_i\eq\top\})$, from (ii), we obtain that for each $i\in\{1,\ldots,n\}$, there exists a homomorphism $e_i\colon\A\to \B_i$ such that $\rho_{\{\b_i\eq\top\},X,\mathcal{MV}}=e_i\circ\rho_{\{\a\eq\top\},X,\mathcal{MV}}$. Since $\rho_{\{\b_i\eq\top\},X,\mathcal{MV}}$ is onto, so is $e_i$. 
By (iii), it follows that $S=\{e_1,\ldots,e_n\}$ is a set of coexact unifiers of $\A$. We claim that $S$ is a complete set in $\EUAlg_{\mathcal{MV}}(\A)$. 
Indeed, let $e\colon \A\to\alg{C}\in\EUAlg_{\mathcal{MV}}(\A)$. By (i), there exists $i\in\{1,\ldots,n\}$ and $h\colon \B_i\to \alg{C}$  such that $e\circ\rho_{\{\a\eq\top\},X,\mathcal{MV}}=h\circ\rho_{\{\b_i\eq\top\},X,\mathcal{MV}}$. 
Since $\rho_{\{\a\eq\top\},X,\mathcal{MV}}$ is onto and $\rho_{\{\b_i\eq\top\},X,\mathcal{MV}}=e_i\circ\rho_{\{\a\eq\top\},X,\mathcal{MV}}$, it follows that $e=h\circ e_i$, that is, $e\leq e_i$. 
This proves that $\tp(\EUAlg_{\mathcal{MV}}(\A))\in\{1,\omega\}$, hence  the exact type of $\mathcal{MV}$ is either unitary or finitary. 
By \cite[Lemma~4.2]{Jer09b}, $x\vee\neg x\eq \top \imp x\eq \top,\neg x\eq\bot$ is admissible in $\mathcal{MV}$ and it is easy to see that neither $x\vee\neg x\eq \top \imp x\eq \top$  nor $x\vee\neg x\eq \top \imp \neg x\eq\bot$  are admissible.
So by Corollary~\ref{Cor:TypeUnitary}, $\mathcal{MV}$ has finitary exact type. It is possible to 
prove that $\tp(\EUAlg_{\mathcal{MV}}(\Fp_{\mathcal{MV}}(\{x\vee\neg x\eq\top\})))=2$, but such a calculation is 
beyond the scope of this paper.
\end{example}

%%%%%%%%%%%%%%%%%%%%%%%%%%%%%%%%%%%%%%%%%%%%%%%%%%%%%%%%%

\section{Concluding Remarks} \label{s:concluding_remarks}

We have introduced a new hierarchy of exact unification types based on an inclusion preordering of 
unifiers, showing that in certain cases, the exact type reduces from nullary or infinitary unification type 
to finitary or even unitary exact type. Note, however, that we do not know if there are examples of 
equational classes of (i) finitary unification type that have unitary exact type, (ii) infinitary unification type 
that have unitary or nullary exact type, (iii) nullary unification type that have infinitary exact type.

In \cite{CM13}, the current authors present axiomatizations for 
admissible rules of several locally finite (and hence of finitary exact unification type) equational classes 
with classical unification type $0$. In all these cases a complete description of exact algebras, and the 
finite exact unification type plays a central (if implicit) role. We therefore 
expect that this approach will be useful for tackling other classes of algebras that have unitary or finitary exact type, 
independently of their unification type. 

%It would be good to work out the case of distributive lattices for finding mgus. That is, give an algorithm for finding the most general exact %unifier 
%of a finite set of identities in distributive lattices. 

\end{document}